\newtheorem{thm}{Theorem}[section]
\newtheorem{lem}[thm]{Lemma}
\theoremstyle{remark}
\newtheorem{rmk}[thm]{Remark}
\def\area{{\textrm{area}}}
\def\C{{\mathbb C}}
\def\id{{\textrm{id}}}
\def\p{{\partial}}
\def\pbar{{\overline{\partial}}}
\def\R{{\mathbb R}}
\def\Z{{\mathbb Z}}
\title[Holomorphic disks with pre-Lagrangian boundary]{$J$-holomorphic disks with pre-Lagrangian boundary conditions}
\author{Stefan M\"uller}
\address{Georgia Southern University, Department of Mathematical Sciences, 65 Georgia Ave.\ Room 3008, P.O.\ Box 8093, Statesboro, GA 30460, USA}
\email{smueller@georgiasouthern.edu}
\subjclass[2010]{53D10, 53D12, 58C10}
\keywords{Contact, symplectization, coisotropic, pre-Lagrangian, Lagrangian, suspension, double, figure eight trick, holomorphic, Gromov compactness, persistence principle, immediately displaceable, convex surface, rational cohomology class, exact Lagrangian}
\begin{document}
\thispagestyle{plain}

\begin{abstract}
The purpose of this paper is to carry out a classical construction of a non-constant holomorphic disk with boundary on (the suspension of) a Lagrangian submanifold in $\R^{2 n}$ in the case the Lagrangian is the lift of a coisotropic (a.k.a.\ pre-Lagrangian) submanifold in (a subset $U$ of) $\R^{2 n - 1}$.
We show that the positive lower and finite upper bounds for the area of such a disk (which are due to M.~Gromov and J.-C.~Sikorav and F.~Laudenbach-Sikorav for general Lagrangians) depend on the coisotropic submanifold only but not on its lift to the symplectization.
The main application is to a $C^0$-characterization of contact embeddings in terms of coisotropic embeddings in another paper by the present author.
Moreover, we prove a version of Gromov's non-existence of exact Lagrangian embeddings into standard $\R^{2 n}$ for coisotropic embeddings into $S^1 \times \R^{2 n}$.
This allows us to distinguish different contact structures on the latter by means of the (modified) contact shape invariant.
As in the general Lagrangian case, all of the existence results are based on Gromov's theory of $J$-holomorphic curves and his compactness theorem (or persistence principle).
Analytical difficulties arise mainly at the ends of the cone $\R_+ \times U$.
\end{abstract}

\maketitle

\section{Introduction and main results} \label{sec:intro}

Throughout this paper, let $(M, \xi)$ be a (connected) cooriented contact manifold without boundary of dimension $2 n - 1$, $\alpha$ be a contact form with $\ker \alpha = \xi$ that induces the given coorientation of the contact structure $\xi$, and $L$ be a closed (and connected) smooth $n$-dimensional manifold.
We consider embeddings $\iota \colon L \hookrightarrow M$ so that the one-form $f \, \iota^* \alpha$ on $L$ is closed for some positive function $f$ on $L$.
Such embeddings are called coisotropic or pre-Lagrangian.
See subsection~\ref{subsec:coisotropic} for details.
We can identify $M$ with the subset $1 \times M$ in its symplectization $\R_+ \times M$.
If $U$ is an open subset of $\R^{2 n - 1}$ with its standard contact structure, its symplectization can be identified with a cone $CU$ in $\R^{2 n}$ with its standard symplectic structure.

\begin{thm} \label{thm:non-constant-disk}
Let $U \subset \R^{2 n - 1}$ be an open subset, $\iota \colon L \hookrightarrow U$ be a coisotropic embedding, $H \colon [0, 1] \times U \to \R$ be a compactly supported contact Hamiltonian so that $(\varphi_H^1 \circ \iota) (L) \cap \iota (L) = \emptyset$, and $\epsilon > 0$.
Denote by $\Psi \colon L \times S^1 \to U \times \R^2$ the induced double suspension.
Then there exists a non-constant holomorphic disk $(D, \p D) \to (\R^{2 n + 1}, \Psi (L \times S^1))$ of area $a$ such that $0 < a < 4 \| H \| + \epsilon$, and the obvious map $\p D \to \Psi (L \times S^1) \to L \times S^1 \to L$ represents a nonzero element of $H_1 (L, \R)$.
\end{thm}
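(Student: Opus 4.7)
The proof will adapt the classical Gromov--Sikorav--Laudenbach construction of a holomorphic disk with boundary on a displaceable Lagrangian, with two new features specific to the present setting: the Lagrangian $\Psi(L \times S^1)$ arises from first lifting a coisotropic submanifold to the symplectization and then suspending, and the ambient symplectic manifold is noncompact with cone-like ends.

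\noindent\textbf{Step 1: The double suspension.} Using the positive function $f$ with $d(f \iota^* \alpha) = 0$, the map $x \mapsto (f(x), \iota(x))$ realizes $L$ as a Lagrangian $\tilde{L}$ in the symplectization $\R_+ \times U \cong CU \subset \R^{2n}$ with symplectic form $d(t \alpha)$. The contact Hamiltonian $H$ lifts to a $1$-homogeneous Hamiltonian $\tilde{H}$ on $CU$ whose time-$1$ flow displaces $\tilde{L}$ from itself, with Hofer norm controlled by $\|H\|$. Applying the standard Lagrangian suspension of $(\tilde{L}, \tilde{H})$, modified by the figure-eight trick so as to be embedded, produces the Lagrangian $\Psi(L \times S^1)$.

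\noindent\textbf{Step 2: Producing the disk.} Fix a compatible almost complex structure $J$ on the ambient, chosen to be adapted near the ends (see Step~3). Consider the one-parameter family of suspensions built from the Hamiltonians $sH$ for $s \in [0, 1]$: at $s = 0$ the Lagrangian is $\tilde{L} \times S^1$, which bounds an obvious family of small disks in the $\R^2$-factor, while at $s = 1$ the Lagrangian is displaced from $\tilde{L}$ and no continuous deformation of the trivial filling can survive. Gromov compactness applied to this family, together with the standard bubbling argument, produces a non-constant $J$-holomorphic disk with boundary on $\Psi(L \times S^1)$ of area at most $4\|H\| + \epsilon$; the factor $4$ reflects both the figure-eight construction and the contact-to-symplectic lift.

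\noindent\textbf{Step 3: Boundary homology class and the main obstacle.} The symplectic area of any such disk decomposes, via Stokes on the primitive $t \alpha + p\, dq$, into a contribution from the projection to $L$ (namely $\int_{\p D} f \iota^* \alpha$, a homology invariant since $f \iota^* \alpha$ is closed) and a contribution from the $S^1$-winding. If the class of $\p D$ in $H_1(L, \R)$ were zero, the first term would vanish and the $S^1$-winding contribution would be discretely quantized, forcing the disk to be constant once $\epsilon$ is chosen small enough; hence the class of $\p D$ in $H_1(L, \R)$ is nonzero. The principal analytical difficulty, which is what distinguishes the present argument from the compact Lagrangian case, is confining the disk near the ends $t \to 0, \infty$ of the cone $CU$ and near the ends of the open set $U \subset \R^{2n-1}$. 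To handle this I would choose $J$ to be of contact type near these ends so that $t$ (together with a suitable exhaustion function on $U$) is strictly plurisubharmonic, and then invoke the maximum principle to restrict every holomorphic disk with boundary on the compactly contained $\Psi(L \times S^1)$ to a fixed compact subset of the ambient, where Gromov compactness applies. This confinement is the key technical ingredient, and is what requires the careful geometric setup at the cone ends alluded to in the abstract.
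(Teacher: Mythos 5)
Your outline reproduces the right skeleton (lift to the symplectization, double the path, suspend, figure-eight trick, Gromov compactness, Stokes on a primitive), but the two steps that carry the actual content have genuine gaps. In Step~2 you deform the \emph{boundary condition} through the suspensions of $sH$, $s \in [0,1]$. This fails twice. First, the figure-eight suspension is an embedding only because $(\varphi_H^1 \circ \iota)(L) \cap \iota(L) = \emptyset$; for intermediate $s$ this displacement condition generally fails, the suspension of $sH$ is merely an immersion with double points, and the moving-Lagrangian family is not even well posed in the class where compactness is applied. Second, you give no mechanism for non-solvability at $s = 1$: displacement of $\tilde{L}$ from itself does not obstruct disks with boundary on the suspension --- the theorem asserts precisely that such disks exist. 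The paper instead fixes the Lagrangian $SL$ (the lift of the suspension of the double $G$, with $\|G\| = 2\|H\|$) and deforms the \emph{inhomogeneous term}: $\pbar u = s\,v$ with $v = (0, \ldots, 0, \sqrt{t}\,\sigma)$ supported in the last complex coordinate (with $\sqrt{t}$ cut off outside a slab $T_1 < z_1 < T_2$), where $|\sigma| > R$ and $\pi R^2 < 2\|H\| + 3\epsilon$ bounds the figure-eight disk $B$. Non-solvability at $s = 1$ is then a concrete Stokes/mean-value computation after composing with the projection $\Pi(\rho(t,z,x,y)) = \rho(1,z,x,y)$, which rescales planar vectors by $1/\sqrt{t}$ so that $\pbar(\Pi \circ \phi) = \sigma$ \emph{exactly}. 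This $\sqrt{t}$-weighting is also what makes the bound $4\|H\| + \epsilon$ (a factor $2$ from doubling, a factor $2$ from $\area \le 2\int_D |\pbar u|^2\,dx\,dy$ since $[u] = 0$) independent of the lift; your claim in Step~1 that the lifted Hamiltonian $tH$ has ``Hofer norm controlled by $\|H\|$'' is false, since its oscillation over the region containing the lifted Lagrangian scales with $\max f$, over which there is no control --- removing this $f$-dependence is the main point of the paper (Section~\ref{sec:limitations} explains why the naive lift yields only $f$-dependent constants). Relatedly, your maximum-principle confinement cannot be invoked as stated: the curves solve an inhomogeneous equation, for which plurisubharmonic functions composed with solutions are not subharmonic. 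The paper confines solutions because $v$ perturbs only the last coordinate, so $\pi_1 \circ u$ is honestly holomorphic and ball convexity pins $T_1 < z_1 < T_2$; no adapted $J$ is needed, and indeed the bubble must be honestly holomorphic for the standard structure to yield the area identities used afterwards.

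Your Step~3 homology argument also does not close. The only free parameter is $\epsilon$; you cannot shrink $4\|H\|$, so even if the $S^1$-component of $[\p D]$ contributed to the area in multiples of some $c \neq 0$, nothing forces $|c| > 4\|H\| + \epsilon$, and the quantization argument proves nothing. The paper instead \emph{normalizes the primitive} so that the $S^1$-contribution vanishes identically: choosing the figure-eight primitive with $\eta - \mu \circ \psi = -\chi(s)\,r s$ and then deforming $p\,dq - d\mu$ to $\tfrac{1}{2}(p\,dq - q\,dp)$ by the Gray-stability isotopy of Section~\ref{sec:suspension} yields $(\rho \circ \widehat{\Psi})^* \lambda_0 = \pi_1^*(f\,\iota^*\alpha_0)$ (Equation~\ref{eqn:cohomology-class}). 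Hence $\int f\,\iota^*\alpha$ over the projected boundary equals $\area(w_1) > 0$, so the class in $H_1(L, \R)$ is nonzero for \emph{every} $\epsilon$, with no case analysis; your proposal omits this normalization entirely, and without it the $ds$-component of the pulled-back primitive need not pair with $H_1$ in any quantized way. Finally, note that the disk in the statement lives in $\R^{2n+1}$: it is $w = \Pi \circ w_1$, and the projection step (including why $\area(w) > 0$, which again uses Equation~\ref{eqn:suspension-lift} and the planar rescaling property of $\Pi$) is absent from your outline.
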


On the other hand, the area of such a disk is also bounded from below.

\begin{lem}[{\cite{laudenbach:hdl94}}] \label{topo-area-bound}
If $K \subset \R^d$ is a compact submanifold, then there exists a constant $b = b (K) > 0$ so that any disk with non-trivial boundary in $H_1 (K, \R)$ has area $a \ge b$.
\end{lem}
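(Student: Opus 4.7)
The plan is cohomological: I extend a basis of $H^1 (K, \R)$ to compactly supported one-forms on $\R^d$ via a tubular neighborhood retraction, apply Stokes' theorem to convert the boundary homology pairing into a bounded 2-form integral over the disk, and then use the lattice structure of integer homology to get uniformity of the constant $b$.

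Concretely, I would first fix closed one-forms $\beta_1, \ldots, \beta_k$ on $K$ whose classes form a basis of $H^1 (K, \R)$. Pick a tubular neighborhood $N$ of $K$ in $\R^d$ with smooth retraction $\pi \colon N \to K$, and a bump function $\chi \in C^\infty_c (N)$ equal to $1$ on a smaller neighborhood of $K$. Then $\tilde \beta_i := \chi \cdot \pi^* \beta_i$, extended by zero, is a compactly supported one-form on $\R^d$ with $\tilde \beta_i |_K = \beta_i$, and $C := \max_i \| d \tilde \beta_i \|_\infty$ is finite and depends only on $K$ and these auxiliary choices.

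For any smooth disk $u \colon (D, \p D) \to (\R^d, K)$, Stokes' theorem gives
\[
\int_{\p D} u^* \beta_i = \int_{\p D} u^* \tilde \beta_i = \int_D u^* d \tilde \beta_i ,
\]
and the pointwise inequality $| (u^* d \tilde \beta_i)_p | \le \| d \tilde \beta_i \|_\infty \cdot \mathrm{Jac} (u)_p$ integrates to
\[
\bigl| \int_{\p D} u^* \beta_i \bigr| \le C \cdot \area (u) .
\]
So it suffices to bound the left hand side away from zero uniformly in $u$.

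This is where the topological input enters: the class $[u |_{\p D}]$ belongs to the image $\Lambda$ of $H_1 (K, \Z) \to H_1 (K, \R)$, which is a full rank lattice in the finite dimensional vector space $H_1 (K, \R)$ (using that $H_1 (K, \Z) / \mathrm{torsion}$ has rank equal to $\dim_\R H_1 (K, \R) = k$). In the basis of $H_1 (K, \R)$ dual to $[\beta_1], \ldots, [\beta_k]$, every nonzero element of $\Lambda$ has some coordinate bounded below in absolute value by a positive constant $c (K)$, namely the minimum of such coordinates over nonzero lattice elements; setting $b := c (K) / C$ then finishes the proof. I expect the main (minor) subtlety to be verifying this discreteness of $\Lambda$, which follows from the rank equality just noted and the fact that a finitely generated torsion free abelian group of full $\R$-rank sits inside the ambient vector space as a genuine lattice.
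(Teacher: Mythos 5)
Your proof is correct: the paper itself gives no argument for this lemma (it is quoted from Laudenbach--Sikorav), and your extension-by-cutoff of closed one-forms, Stokes' theorem with the comass bound $\bigl| \int_D u^* d\tilde\beta_i \bigr| \le C \cdot \area(u)$, and the discreteness of the image of $H_1(K,\Z)$ in $H_1(K,\R)$ is precisely the standard argument from that reference. The one point you flagged -- that the image is a genuine lattice, so the minimum of the sup-norm over nonzero elements is attained and positive -- is handled correctly by your rank observation, so nothing is missing.
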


Our main interest is in (tubular neighborhoods of embedded) $n$-dimensional submanifolds of $\R^{2 n - 1}$ that admit a nowhere tangent contact vector field.
When $n = 2$, such a submanifold is called a convex surface.

\begin{thm} \label{thm:empty-shape}
Let $\jmath \colon L \hookrightarrow \R^{2 n - 1}$ be a (necessarily non-coisotropic) embedding and $X = X_H$ be a contact vector field that is nowhere tangent to the image $\jmath (L)$.
Then there exists a tubular neighborhood $N$ of $\jmath (L)$ with the property that there exists no coisotropic embedding $\iota \colon L \hookrightarrow N$ such that the induced homomorphism $\iota_* \colon H_1 (L, \R) \to H_1 (N, \R)$ is injective.
In particular, there exists no coisotropic embedding $\iota \colon L \hookrightarrow N$ with $\iota_* = \jmath_* \colon H_1 (L, \R) \to H_1 (N, \R)$.
\end{thm}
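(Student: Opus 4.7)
The plan is to argue by contradiction, playing the upper area bound of Theorem~\ref{thm:non-constant-disk} against the topological lower bound of Lemma~\ref{topo-area-bound}. The nowhere-tangency of $X_H$ to the compact submanifold $\jmath(L)$ produces a displacing contact Hamiltonian of arbitrarily small norm, while injectivity of $\iota_*$ forces the boundary of any disk provided by Theorem~\ref{thm:non-constant-disk} to carry non-trivial $H_1$ in a fixed compact subset of $\R^{2n+1}$, which bars the area from being too small.

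The first move is to fix, \emph{before} choosing any displacing Hamiltonian, an auxiliary tubular neighborhood $N_0$ of $\jmath(L)$ in $\R^{2n-1}$ and a radius $R_0 > 0$ generous enough to contain the $\R^2$-spread of any relevant double suspension, and to form the compact set $K := \overline{N_0} \times \overline{D^2_{R_0}} \subset \R^{2n+1}$ (smoothed to a genuine submanifold with boundary, which does not change $H_1$). Lemma~\ref{topo-area-bound} then yields a constant $b := b(K) > 0$. Next, because $X_H$ is nowhere tangent to the compact $\jmath(L)$, its contact flow displaces $\jmath(L)$ in some positive time; by rescaling and cutting off to compact support one obtains a compactly supported contact Hamiltonian $H'$ of arbitrarily small norm whose time-$1$ flow displaces a smaller tubular neighborhood $N \subset N_0$ from itself. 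I choose $\|H'\|$ and $\epsilon > 0$ small enough that $4 \|H'\| + \epsilon < b$ and so that $\Psi(L \times S^1) \subset K$ for every embedding $\iota \colon L \hookrightarrow N$ (the $\R^2$-factor of $\Psi$ is controlled by $\|H'\|$ together with a fixed circle radius, independently of $\iota$).

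Suppose for contradiction that $\iota \colon L \hookrightarrow N$ is a coisotropic embedding with $\iota_*$ injective. Applying Theorem~\ref{thm:non-constant-disk} produces a non-constant disk $u \colon (D, \p D) \to (\R^{2n+1}, \Psi(L \times S^1))$ of area $a < b$ whose boundary represents a non-zero class $\beta \in H_1(L, \R)$. The inclusions $N \hookrightarrow N_0$ and $\overline{N_0} \hookrightarrow K$ are both $H_1$-isomorphisms (tubular-neighborhood retraction; collapse of a contractible disk factor), so the class of $u|_{\p D}$ in $H_1(K, \R)$ matches $\iota_*(\beta) \ne 0$, and Lemma~\ref{topo-area-bound} forces $a \ge b$, contradicting $a < b$. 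The ``in particular'' clause follows because $\jmath_* \colon H_1(L, \R) \to H_1(N, \R)$ is split injective (via the tubular-neighborhood retraction $N \to \jmath(L)$), so any $\iota$ with $\iota_* = \jmath_*$ satisfies the injectivity hypothesis of the first part. The step I expect to require the most care is the order of quantifiers and the uniform $\R^2$-control: $b$ must be pinned down \emph{before} $H'$ and $N$, which demands estimating the $\R^2$-extent of $\Psi$ in terms of $\|H'\|$ alone rather than in terms of the particular coisotropic $\iota$.
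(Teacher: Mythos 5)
Your proposal is correct, and its overall architecture is exactly the paper's: use nowhere-tangency to manufacture a compactly supported displacing contact Hamiltonian of arbitrarily small norm, then play the upper bound $4\|H\| + \epsilon$ of Theorem~\ref{thm:non-constant-disk} against the lower bound of Lemma~\ref{topo-area-bound}. The one step you handle differently is where Lemma~\ref{topo-area-bound} gets applied. You apply it upstairs in $\R^{2n+1}$, taking $K = \overline{N_0} \times \overline{D^2_{R_0}}$ so that the disk $u$ itself has boundary in $K$; this forces you to verify uniform control of the $\R^2$-spread of the suspension in terms of $\|H'\|$ alone, which you correctly identified as the delicate quantifier issue and correctly resolved (the figure-eight annulus fits in a disk of area less than $2\|G\| + 3\epsilon = 4\|H'\| + 3\epsilon$, so both ``smallness'' constraints on $\|H'\|$ are compatible with fixing $R_0$ and hence $b(K)$ in advance). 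The paper instead applies the lemma downstairs: it takes $K = \overline{U} \subset \R^{2n-1}$, the closure of a tubular neighborhood fixed before rescaling $H$, and uses the elementary observation that $\area(w) \ge \area(\pi_1 \circ w)$, where $\pi_1$ is the projection to $\R^{2n-1}$. Since the projected (merely smooth) disk has boundary in $K$ with the same non-trivial class in $H_1(K, \R)$, the lemma applies directly, and the entire $\R^2$-direction is killed by the projection. The projection trick thus dissolves the uniformity step your version needs, and makes the choice of $K$ manifestly independent of $\|H'\|$; your product-$K$ version buys nothing extra but is a legitimate alternative, at the same level of rigor as the paper (both treat the identification of the boundary class in $H_1(K, \R)$ via the flow homotopy $\tau \mapsto \varphi_G^{\tau s}$ implicitly, and both need $H'$ cut off with support inside the fixed neighborhood so that this homotopy stays there).
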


A coisotropic embedding $L \hookrightarrow M$ lifts to a Lagrangian embedding of $L$ into the symplectization $\R_+ \times M$, and if $U$ is an open subset of $\R^{2 n - 1}$, the latter can be identified with a cone in $\R^{2 n}$.
If the Lagrangian submanifold of (the cone in) $\R^{2 n}$ is displaceable, then a classical construction due to Gromov \cite{gromov:pcs85} and Sikorav \cite{sikorav:qpp91} gives rise to a non-constant holomorphic disk with boundary on (the suspension of) the Lagrangian submanifold, and an upper bound for the area of such a disk.
On the other hand, this area is bounded from below by a positive constant if either the Lagrangian submanifold is rational or if one imposes restrictions on its homotopy type.
Note however that if $\iota \colon L \hookrightarrow M$ is as above, then the one-form $s f \, \iota^* \alpha$ on $L$ is also closed for all $s > 0$, and the above upper and lower bounds depend on this parameter $s$.
The purpose of this paper is to carry out the Gromov and Sikorav construction for (Lagrangian lifts of) coisotropic embeddings into open subsets $U$ of $\R^{2 n - 1}$, and to refine the corresponding estimates to prove the aforementioned theorems.
We moreover prove the following version of Gromov's theorem regarding the non-existence of exact Lagrangian embeddings into standard $\R^{2 n}$.

\begin{thm} \label{thm:non-exact-coiso}
Let $\iota \colon S^1 \times L \hookrightarrow S^1 \times \R^{2 n}$ be a coisotropic embedding and $f$ be a positive function on $S^1 \times L$ so that the one-form $f \, \iota^* \alpha_{\textrm{can}}$ is closed, and suppose that $\iota^* = \iota_0^* \colon H^1 (S^1 \times \R^{2 n}, \R) \to H^1 (S^1 \times L, \R)$, where $\iota_0$ denotes the canonical map $S^1 \times L \to S^1 \times 0 \to S^1 \times \R^{2 n}$.
Consider the corresponding cohomology class $[f \, \iota^* \alpha_{\textrm{can}}] = (A, Z) \in H^1 (S^1, \R) \oplus H^1 (L, \R)$.
Then $Z \not= 0$.
\end{thm}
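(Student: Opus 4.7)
The plan is to argue by contradiction using a scaling/rationality argument, mirroring Gromov's classical proof of non-existence of exact Lagrangian embeddings into $\R^{2n}$. Assume that $Z = 0$, so $[f\,\iota^*\alpha_{\textrm{can}}] = (A, 0) \in H^1(S^1, \R) \oplus H^1(L, \R)$.

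First I would produce a displacing Hamiltonian. Since $\iota(S^1 \times L)$ is compact (as $L$ is closed) and $\R^{2n}$ is unbounded, there exists a compactly supported contact Hamiltonian $H$ on $S^1 \times \R^{2n}$ whose time-one flow displaces $\iota(S^1 \times L)$ from itself, for instance by translating in $\R^{2n}$; set $E := \|H\|$. Next, for each $s > 0$, I would apply an analog of Theorem~\ref{thm:non-constant-disk} for coisotropic embeddings into $S^1 \times \R^{2n}$ to the rescaled Lagrangian lift corresponding to the closed one-form $sf\,\iota^*\alpha_{\textrm{can}}$, whose cohomology class is $s(A, 0)$. The construction adapts to this contact manifold essentially without change, since the paper's central observation is that the area bound depends only on the coisotropic embedding and on $H$, not on the lift. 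This produces a non-constant holomorphic disk $u_s$ with boundary on the double suspension of the lift, of area $0 < a_s < 4 E + \epsilon$ (uniform in $s$), whose boundary loop projects non-trivially to $H_1(L, \R)$.

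The contradiction then comes from Stokes' theorem together with the discreteness of the period lattice. By Stokes, $a_s$ equals the integral of the Liouville primitive along $\p u_s$, which is the evaluation of the cohomology class $s(A, 0)$ on the integral homology class of the boundary in $S^1 \times L$ (the hypothesis $\iota^* = \iota_0^*$ ensures that the $S^1$-winding of the boundary loop is a well-defined integer). Since $Z = 0$, this evaluation equals $sA \cdot n_1(s)$ with $n_1(s) \in \Z$, so $a_s$ lies in the lattice $sA \cdot \Z$. For $s$ sufficiently large, the minimum positive element $s|A|$ exceeds $4 E + \epsilon$, forcing $n_1(s) = 0$ and hence $a_s = 0$ — contradicting the non-constancy of $u_s$. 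In the degenerate case $A = 0$ the one-form $f\,\iota^*\alpha_{\textrm{can}}$ is outright exact, so $a_s = 0$ for every $s$, directly contradicting non-constancy.

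The main obstacle is verifying that $a_s$ really lies in the lattice $sA \cdot \Z$, i.e., that the Liouville primitive on the ambient symplectic manifold, when restricted to the double suspension of the lift, has periods exactly in this lattice rather than being offset by a bounded correction coming from the suspension direction. Resolving this requires a careful analysis of the double suspension construction, an explicit identification of the Liouville primitive along the suspension coordinate, and the use of the hypothesis $\iota^* = \iota_0^*$ to rule out stray contributions. Once this is in place, the rationality argument applied at large $s$ produces the contradiction cleanly.
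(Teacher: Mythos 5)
You have a genuine gap, and it is precisely the one the paper itself isolates in Section~\ref{sec:limitations}. Your argument needs two properties of a single number $a_s$: that it lies in the lattice $sA \cdot \Z$ (via Stokes and the period computation), and that it satisfies the $s$-independent bound $a_s < 4E + \epsilon$. But in the construction behind Theorem~\ref{thm:non-constant-disk} these two properties belong to \emph{different} disks. The disk whose area is a period of the closed one-form --- via Equation~\ref{eqn:cohomology-class}, $\area (w_1) = \int_{w_1 (\p D)} \lambda_0$ evaluates the class $\pi_1^* [s f \, \iota^* \alpha_{\textrm{can}}]$ on the boundary --- is the honestly holomorphic disk $w_1$ with boundary on the Lagrangian lift in the cone; the disk whose area satisfies the lift-independent bound $4 \| H \| + \epsilon$ is the projected disk $w = \Pi \circ w_1$ at the level of the contact manifold. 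When you rescale $f$ to $s f$, the lift sits at heights $t \sim s$, and since $d\Pi$ rescales planar vectors by $1 / \sqrt{t}$, the conversion between the two areas costs a factor that itself grows like $s$: one only has $\area (w_1) \le C (s f) \, \area (w)$ with $C (s f) \sim s \, C (f)$. So from $\area (w_1) = s A n_1$ and $\area (w) < 4 E + \epsilon$ you get only $|A n_1| \le C (f) (4 E + \epsilon)$, with no $s$ left to send to infinity; the paper makes exactly this point when it observes that the ratio $\gamma / C (f)$ is invariant under rescaling $f$ by a positive constant and that the resulting estimate ``does not provide an obstruction to coisotropic embeddings.'' A telling symptom is that your argument never actually uses the hypothesis $\iota^* = \iota_0^*$: if it worked, it would prove a strictly stronger statement, whereas in the paper that hypothesis is essential to identify $\textrm{Im} (\iota^*)$ with $H^1 (S^1, \R) \oplus 0$.

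A secondary gap is your claim that Theorem~\ref{thm:non-constant-disk} adapts to coisotropic embeddings into $S^1 \times \R^{2 n}$ ``essentially without change'': the quantitative construction is carried out in the paper only for open subsets $U \subset \R^{2 n - 1}$, using the identification $CU \subset \C^{n + 1}$, the holomorphicity of the projection $\pi_1 \circ u$, and the strip bounds on the first coordinate; transplanting this to the cone inside $T^* S^1 \times \R^{2 n}$ is plausible but is real work the paper deliberately avoids. The paper's actual proof is much softer and makes no use of displacement energy, scaling, or rationality: it lifts $\iota$ to the Lagrangian embedding $\rho \circ \hat{\iota}$ into $T^* S^1 \times \R^{2 n}$ and applies the generalization of Gromov's exact-Lagrangian theorem to products $X \times \R^2$ (the first theorem of Section~\ref{sec:proof}, specialized in Theorem~\ref{thm:non-exact-lag}): Gromov compactness produces a non-constant holomorphic disk with boundary on the Lagrangian (spheres being excluded by exactness of the first factor), the disk's boundary has nonzero period against the Liouville primitive but zero period against every closed one-form pulled back from the ambient space, and the hypothesis $\iota^* = \iota_0^*$ then converts ``the Liouville class is not in $\textrm{Im} (\iota^*)$'' into $Z \not= 0$. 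To salvage your route you would need an $s$-independent upper bound on $\area (w_1)$ itself, which is false, since that area \emph{equals} a period of $s f \, \iota^* \alpha_{\textrm{can}}$ and grows linearly in $s$.
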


The main applications of these theorems are to a $C^0$-characterization of contact embeddings and to the calculation of the (modified) shape invariant of $S^1 \times U$ for open subsets $U$ of $\R^{2 n - 1}$ in \cite{mueller:csc16}.
The present paper is organized as follows.

Section~\ref{sec:contact} presents necessary elements of contact geometry and symplectization, and section~\ref{sec:suspension} explains the suspension construction on a coisotropic embedding.
Section~\ref{sec:holo-disk} reviews Gromov's theory of $J$-holomorphic curves, and contains the proofs of Theorems~\ref{thm:non-constant-disk} and \ref{thm:empty-shape}.
Rational coisotropic embeddings and energy-capacity type inequalities are discussed in Section~\ref{sec:limitations}.
The final section~\ref{sec:proof} generalizes some of the theory to embeddings into products $Y \times \R^2$, and contains the proof of Theorem~\ref{thm:non-exact-coiso}.

\section{Contact geometry and symplectization} \label{sec:contact}

\subsection{Symplectization} \label{subsec:symplectization}
The symplectization of $(M, \alpha)$ is the symplectic manifold $\R_+ \times M$ with symplectic structure $\omega = d (t \pi_2^* \alpha)$, where $t$ denotes the coordinate on the first factor, and $\pi_2 \colon \R_+ \times M \to M$ denotes the projection to the second factor.
Up to an exact symplectic diffeomorphism, the symplectization depends only on the contact structure $\xi$ and not on the particular choice of contact form $\alpha$.

Let $U$ be an open subset of $\R^{2 n - 1}$ with its standard contact structure $\xi_0 = \ker \alpha_0$, where
\[ \alpha_0 = dz + \frac{1}{2} \sum_{j = 1}^{n - 1} (x_j dy_j - y_j dx_j) = dz + \frac{1}{2} \sum_{j = 1}^{n - 1} r_j^2 d\theta_j, \]
and $x_j = r_j \cos \theta_j$ and $y_j = r_j \sin \theta_j$ denote (rectangular and polar) coordinates on $\R^{2 n - 2}$.
We can identify the symplectization $(\R_+ \times U, d (t \pi_2^* \alpha_0))$ with a cone in $\R_+ \times \R^{2 n - 1} \subset \R^{2 n}$ via the symplectic embedding
\begin{align} \label{eqn:rho}
\rho (t, z, r_1, \theta_1, \ldots, r_{n - 1}, \theta_{n - 1}) = (t, z, \sqrt{t} \, r_1, \theta_1, \ldots, \sqrt{t} \, r_{n - 1}, \theta_{n - 1}) .
\end{align}
Note that the latter pulls back the one-form
\begin{align} \label{eqn:lambda}
\lambda_0 = x_1 dy_1 + \frac{1}{2} \sum_{j = 2}^n (x_j dy_j - y_j dx_j) = x_1 dy_1 + \frac{1}{2} \sum_{j = 2}^n r_j^2 d\theta_j
\end{align}
to the one-form $\rho^* \lambda_0 = t \pi_2^* \alpha_0$, and thus is indeed symplectic with respect to the standard symplectic form $\omega_0 = d\lambda_0$ on $\R^{2 n}$.
We denote the image of this embedding by $CU = \rho (\R_+ \times U) \subset \R^{2 n}$.

The tangent bundle $T (\R_+ \times U)$ decomposes naturally into the direct sum bundle $T\R_+ \oplus T\R \oplus T\R^2 \oplus \cdots \oplus T\R^2$, and we call a tangent vector planar if it is tangent to one of the $T\R^2$ factors.
We also call the image of a planar vector under the homomorphism $d\rho$ planar.
An important role will later be played by the projection map $\Pi \colon CU \to CU$ defined by $\Pi (\rho (t, z, x, y)) = \rho (1, z, x, y)$.
For later reference, we observe that if $v \in T_{\rho (t, z, x, y)} CU$ is a planar vector, then $d\Pi (v) = (1 \, / \sqrt{t}) v$.

\subsection{Contact isotopies}
Denote by $R_\alpha$ the Reeb vector field of the contact form $\alpha$.
Let $H \colon [0, 1] \times M \to \R$ be a compactly supported smooth function.
The identities $d\alpha (X_H^s, \cdot) = (dH_s (R_\alpha)) \alpha - dH_s$ and $\alpha (X_H^s) = H_s$ define a unique (time-dependent) vector field $X_H^s$ on $M$.
This vector field integrates to an isotopy (or path) of diffeomorphisms $\{ \varphi_H^s \}$ that preserve the (cooriented) contact structure $\xi = \ker \alpha$.
That is, there exist positive functions $h_s$ on $M$ such that $(\varphi_H^s)^* \alpha = h_s \alpha$.

Let $(W, \omega)$ be a symplectic manifold without boundary, and $F \colon [0, 1] \times W \to \R$ be a compactly supported smooth function.
The identity $\omega (X_F^s, \cdot) = - dF_s$ defines a unique (time-dependent) vector field $X_F^s$ on $W$.
This vector field integrates to an isotopy (or path) of diffeomorphisms $\{ \varphi_F^s \}$ that preserve the symplectic form $\omega$.
A contact isotopy $\{ \varphi_H^s \}$ lifts to a Hamiltonian isotopy $\{ \varphi_F^s \}$ on the symplectization $\R_+ \times M$ defined by $\varphi_F^s (t, x) = (t / h_s (x), \varphi_H^s (x))$, and their Hamiltonian functions are related by the identity $F_s (t, x) = t H_s (x)$.
Note that although $F$ is not compactly supported (unless $H = 0$), the vector field $X_F$ is nevertheless uniquely integrable.

\subsection{Coisotropic embeddings} \label{subsec:coisotropic}
An embedding $\iota \colon L \hookrightarrow M$ is called coisotropic if $\iota$ is transversal to the contact structure $\xi$, and the distribution $\iota^* \xi = \ker (\iota^* \alpha) \subset TL$ can be defined by a closed one-form.
Any such one-form must be of the form $f \, \iota^* \alpha$ for a non-zero function $f$ on $L$.
By replacing $f$ by $- f$ if necessary, we may assume that $f$ is positive.
Note that the one-form $s f \, \iota^* \alpha$ is also closed for any $s \not= 0$.
However, since a smooth function on a closed manifold must have a critical point, the codimension one distribution $\iota^* \xi$ cannot be defined by an exact one-form.
Coisotropic embeddings are also called pre-Lagrangian embeddings in the literature.
As in \cite{mueller:csc16} (which in turn adapts this convention from Y.~Eliashberg), we continue to call them coisotropic in this paper.
The term pre-Lagrangian is used in the title of this paper solely to avoid possibly confusion with coisotropic submanifolds of symplectic manifolds (of lower codimension).

An embedding $\iota \colon L \hookrightarrow W$ is called Lagrangian if $\iota^* \omega = 0$ and $\dim W = 2 n$.
A coisotropic embedding $\iota \colon L \hookrightarrow M$ with function $f$ as above lifts to a Lagrangian embedding $\hat{\iota}$ of $L$ into the symplectization $\R_+ \times M$ given by $\hat{\iota} (x) = (f (x), \iota (x))$, since $\hat{\iota}^* (t \pi_2^* \alpha) = f \, \iota^* \alpha$.
In particular, $[\hat{\iota}^* (t \pi_2^* \alpha)] = [f \, \iota^* \alpha] \in H^1 (L, \R)$.

\subsection{Products with exact symplectic manifolds}
Let $(M, \xi = \ker \alpha)$ be a cooriented contact manifold of dimension $2 n - 1$, and $(W, \omega = d\lambda)$ be an exact symplectic manifold of dimension $2 m$.
Then the product $M \times W$ is a cooriented contact manifold with contact structure induced by the contact form $\pi_1^* \alpha + \pi_2^* \lambda$, where $\pi_1$ and $\pi_2$ denote the obvious projections.
Indeed, for dimension reasons
\[ d (\pi_1^* \alpha + \pi_2^* \lambda)^{n - 1 + m} = \frac{(n - 1 + m)!}{(n - 1)! \, m!} \, \pi_1^* (d\alpha)^{n - 1} \wedge \pi_2^* (d\lambda)^m \]
and the top-dimensional form
\[ (\pi_1^* \alpha + \pi_2^* \lambda) \wedge d (\pi_1^* \alpha + \pi_2^* \lambda)^{n - 1 + m} = \frac{(n - 1 + m)!}{(n - 1)! \, m!} \, \pi_1^* (\alpha \wedge (d\alpha)^{n - 1}) \wedge \pi_2^* (d\lambda)^m \]
on $M \times W$ is nowhere vanishing.
It is easy to see that the Reeb vector field of the contact form $\pi_1^* \alpha + \pi_2^* \lambda$ is the Reeb vector field of $\alpha$, or more precisely, it is the horizontal (i.e.\ tangent to $M$) lift $\overline{R}_\alpha$ of $R_\alpha$ via the projection map $\pi_1$.
For simplicity, we often write the above contact form as $\alpha + \lambda$ when proper notation becomes too cumbersome unless it is essential for a particular argument.

\subsection{Sign conventions}
There are various conventions in use for the sign of the symplectic form on the symplectization and the sign of the (contact) Hamiltonian function corresponding to a Hamiltonian or contact vector field, and unfortunately, no combination of these choices yields all of the identities one desires.
We choose to define $\omega = d (t \pi_2^* \alpha)$ on the symplectization without a minus sign in order to not introduce an awkward minus sign in the correspondence between the cohomology class represented by a coisotropic embedding and its lift to a Lagrangian embedding into the symplectization.
Then if one wants the lift of a contact isotopy generated by a (contact) Hamiltonian $H$ to be generated by the Hamiltonian $F = t \, \pi_2^* H$ on the symplectization (again without a minus sign), one must introduce a minus sign in the correspondence between a smooth function and either its contact or its Hamiltonian vector field.
Since this paper is mainly on contact geometry, we opt to define $dF = - \omega (X_F, \cdot)$.
This is consistent with formulas in earlier work by the author pertaining to the relationship between a contact Hamiltonian function and its contact vector field and isotopy, and moreover, the Reeb vector field of a contact form $\alpha$ is generated by the constant function $1$ (rather than $- 1$).

\section{Suspension, double, and Gromov's figure eight trick} \label{sec:suspension}

\subsection{The Lagrangian suspension construction}
Let $(W, \omega)$ be a (connected) symplectic manifold without boundary, $\iota \colon L \hookrightarrow W$ be a Lagrangian embedding, and $\{ \varphi_F^s \}$, $s \in S^1 = \R / \Z$, be a loop of Hamiltonian diffeomorphisms with $\varphi_F^0 = \id$, generated by a $1$-periodic Hamiltonian $F \colon \R \times W \to \R$.
Consider the symplectic manifold $W \times T^* S^1$ with the (product) symplectic structure $\pi_1^* \omega + \pi_2^* (dr \wedge ds)$, where $(s, r)$ are coordinates on the factor $T^* S^1 = S^1 \times \R$.
Then the embedding $L \times S^1 \hookrightarrow W \times T^* S^1$ that is defined by $(x, s) \mapsto ( (\varphi_F^s \circ \iota) (x), s, (- F_s \circ \varphi_F^s \circ \iota) (x) )$ is again Lagrangian, called the (Lagrangian) suspension of $\{ \varphi_F^s \circ \iota \}$.
See for instance \cite[Subsection~3.1.E]{polterovich:ggs01} for a (straightforward) proof.

\subsection{The coisotropic suspension construction}
Let $(M, \xi = \ker \alpha)$ and $L$ be as above, $\iota \colon L \hookrightarrow M$ be a coisotropic embedding so that $f \, \iota^* \alpha$ is a closed one-form on $L$, and $\{ \varphi_H^s \}$, $s \in S^1 = \R / \Z$, be a loop of contact diffeomorphisms with $\varphi_H^0 = \id$ and $(\varphi_H^s)^* \alpha = h_s \alpha$, $h_s > 0$, generated by a $1$-periodic contact Hamiltonian function $H \colon \R \times M \to \R$.
Consider the cooriented contact manifold $M \times T^* S^1$ with contact form $\pi_1^* \alpha + \pi_2^* (r \, ds)$, which we frequently write as $\alpha + r \, ds$ to simplify notation.

\begin{lem} \label{lem:suspension}
The embedding $\Phi = \Phi_H \colon L \times S^1 \hookrightarrow M \times T^* S^1$ defined by \[ (x, s) \mapsto \left( (\varphi_H^s \circ \iota) (x), s, \left( - H_s \circ \varphi_H^s \circ \iota \right) (x) \right) \] is coisotropic.
In fact, the one-form $\displaystyle \frac{f}{h_s \circ \iota} \, \Phi^* (\alpha + r \, ds)$ on $L \times S^1$ is closed.
\end{lem}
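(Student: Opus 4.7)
The plan is to compute $\Phi^*(\alpha + r\,ds)$ directly and observe that a cancellation reduces it to a positive multiple of $\iota^*\alpha$ (pulled back along the projection $L \times S^1 \to L$); the desired properties of $\Phi$ then follow at once from the corresponding properties of $\iota$.

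Write $\Phi = (\Psi, s, -H_s \circ \varphi_H^s \circ \iota)$, where $\Psi \colon L \times S^1 \to M$ is the first component $\Psi(x,s) = \varphi_H^s(\iota(x))$. The second and third components take values in $T^* S^1 = S^1 \times \R$, so the corresponding pullback of $r\,ds$ is immediate and yields $-(H_s \circ \varphi_H^s \circ \iota)\, ds$. For $\Psi^* \alpha$, evaluate at a point $(x,s)$ on a tangent vector $(v,w) \in T_x L \oplus T_s S^1$: since $\varphi_H^s$ is the flow of $X_H^s$, one has $d\Psi(v,w) = d\varphi_H^s(d\iota(v)) + w X_H^s(\Psi(x,s))$. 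Applying $\alpha$ and invoking the two defining identities $(\varphi_H^s)^*\alpha = h_s\alpha$ and $\alpha(X_H^s) = H_s$ yields
\[
\Psi^* \alpha = (h_s \circ \iota)\,\iota^* \alpha + (H_s \circ \varphi_H^s \circ \iota)\, ds.
\]
Adding the two pieces, the $ds$-terms cancel by design, leaving $\Phi^*(\alpha + r\,ds) = (h_s \circ \iota)\,\iota^* \alpha$.

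Multiplying by the positive function $f / (h_s \circ \iota)$ then gives $f\,\iota^*\alpha$, which is closed by the coisotropic hypothesis on $\iota$. To finish, transversality of $\Phi$ to $\ker(\alpha + r\,ds)$ amounts to nonvanishing of $\Phi^*(\alpha + r\,ds)$, and this holds because $h_s > 0$ on $M$ and $\iota^* \alpha$ is nowhere zero on $L$ (the latter being the transversality of $\iota$ to $\xi$).

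There is no serious obstacle here: the argument is a direct bookkeeping calculation, mirroring the Lagrangian suspension construction referenced in Polterovich's book. The one point requiring care is that the $s$-derivative of $\varphi_H^s(\iota(x))$ is $X_H^s(\varphi_H^s(\iota(x)))$, since it is exactly this term that contributes the $H_s\,ds$ piece of $\Psi^* \alpha$ that precisely cancels the $-H_s\,ds$ coming from the pullback of $r\,ds$; this cancellation is the sole reason the third coordinate of $\Phi$ is chosen to be $-H_s \circ \varphi_H^s \circ \iota$.
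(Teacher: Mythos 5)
Your proposal is correct and follows essentially the same computation as the paper: the paper's one-line proof is exactly your decomposition $\Phi^*(\alpha + r\,ds) = (\varphi_H^s \circ \iota)^*\alpha + \alpha\bigl(\tfrac{d}{ds}(\varphi_H^s \circ \iota)\bigr)\,ds - (H_s \circ \varphi_H^s \circ \iota)\,ds = (h_s \circ \iota)\,\iota^*\alpha$, with the same cancellation of the $ds$-terms via $\alpha(X_H^s) = H_s$ and $(\varphi_H^s)^*\alpha = h_s\alpha$. Your explicit verification of transversality (nonvanishing of the pullback, since $h_s > 0$ and $\iota^*\alpha$ is nowhere zero) is a point the paper leaves implicit, and is a correct and welcome addition.
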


\begin{proof}
$\Phi^* (\alpha + r \, ds) = (\varphi_H^s \circ \iota)^* \alpha + \alpha ( \frac{d}{ds} (\varphi_H^s \circ \iota) ) ds - (H_s \circ \varphi_H^s \circ \iota) ds = (h_s \circ \iota) \iota^* \alpha$.
\end{proof}

By the lemma, $\Phi$ lifts to a Lagrangian embedding $\widehat{\Phi} \colon L \times S^1 \hookrightarrow \R_+ \times M \times T^* S^1$ into the symplectization of $M \times T^* S^1$ given by
\begin{align} \label{eqn:suspension-lift}
(x, s) \mapsto \left( \frac{f}{h_s \circ \iota} (x), (\varphi_H^s \circ \iota) (x), s, \left( - H_s \circ \varphi_H^s \circ \iota \right) (x) \right).
\end{align}
Up to the obvious identification of $\R_+ \times (M \times T^* S^1)$ with $(\R_+ \times M) \times T^* S^1$ via the exact symplectic diffeomorphism $(t, x, r, s) \mapsto (t, x, t r, s)$, the Lagrangian lift of a coisotropic suspension coincides with the Lagrangian suspension of the Lagrangian lift of the same coisotropic embedding.
One can extend the function $f$ on $L$ to a function $\overline{f}$ on $M$ in the sense that $f = \overline{f} \circ \iota$.
Then the (coisotropic) suspension $\Phi$ in Lemma~\ref{lem:suspension} (and its Lagrangian lift in Equation~\ref{eqn:suspension-lift}) can be considered (for each $s$) as a globally defined map evaluated at $\iota (x)$.

\subsection{A Hofer-type pseudo-norm}
If $H \colon [0, 1] \times M \to \R$ is a compactly supported contact Hamiltonian, define a pseudo-norm (or pseudo-length)
\[ \| H \| = \int_0^1 \left( \sup_{x \in M} H_s (x) - \inf_{x \in M} H_s (x) \right) ds . \]
This quantity is not a norm since it vanishes on functions that depend only on $s$.
Note that if $H$ is a contact Hamiltonian as in the previous subsection used to define the suspension $\Phi$, one can cut off $H$ outside a neighborhood of the compact subset $\bigcup_{s \in S^1} (\varphi_H^s \circ \iota) (L)$ without altering the definition of $\Phi$.

\subsection{The double of a path of diffeomorphisms}
If a given path of contact diffeomorphisms is not a loop, one can construct a loop of contact diffeomorphisms as follows.
Let $\{ \varphi_H^s \}$, $s \in [0, 1]$, be a path of contact diffeomorphisms with $\varphi_H^0 = \id$, generated by a compactly supported contact Hamiltonian $H \colon [0, 1] \times M \to \R$.
Let $\epsilon > 0$.
After reparametrizing the function $H$ if necessary, we may assume that $H_s = 0$ for $s \le \epsilon$ and $s \ge 1 - \epsilon$, without changing the (pseudo-)length of $H$.
Then the contact Hamiltonian function $G$ defined by $G_s = H_{2 s}$ for $s \in [0, 1 / 2]$ and $G_s = - H_{2 (1 - s)}$ for $s \in [1 / 2, 1]$ defines a smooth $1$-periodic contact Hamiltonian on $\R \times M$.
The corresponding loop of contact diffeomorphisms $\{ \varphi_G^s \}$, $s \in S^1 = \R / \Z$, is given by $\varphi_G^s = \varphi_H^{2 s}$ for $s \in [0, 1 / 2]$ and $\varphi_G^s = \varphi_H^{2 (1 - s)}$ for $s \in [1 / 2, 1]$.
Clearly $\| G \| = 2 \| H \|$.
We call the loop $\{ \varphi_G^s \}$ (or its Hamiltonian $G$) the double of the path $\{ \varphi_H^s \}$ (or of the Hamiltonian $H$), and write $(\varphi_G^s)^* \alpha = g_s \alpha$.
The corresponding double construction on a path of Hamiltonian diffeomorphisms is verbatim the same.

\begin{rmk}
Throughout this paper, we make multiple estimates up to $\epsilon > 0$.
In order to simplify notation, we use the same letter for all of these estimates, and often combine them into a new constant, which we continue to denote by $\epsilon$.
\end{rmk}

\subsection{The Gromov figure eight trick}
Let $\iota \colon L \hookrightarrow M$ be a coisotropic embedding so that $f \, \iota^* \alpha$ is a closed one-form on $L$, and $\{ \varphi_H^s \}$, $s \in [0, 1]$, be a path of contact diffeomorphisms that is generated by a compactly supported contact Hamiltonian $H \colon [0, 1] \times M \to \R$ such that $\varphi_H^0 = \id$ and $(\varphi_H^1 \circ \iota) (L) \cap \iota (L) = \emptyset$.
Let $G$ denote the contact Hamiltonian generating the double $\{ \varphi_G^s \}$, $s \in S^1 = \R / \Z$, of the path $\{ \varphi_H^s \}$ as defined in the previous subsection, and consider the (coisotropic) suspension $\Phi_G (x, s) = \left( (\varphi_G^s \circ \iota) (x), s, \left( - G_s \circ \varphi_G^s \circ \iota \right) (x) \right)$ of $\{ \varphi_G^s \circ \iota \}$.
Let $\epsilon > 0$, and define $a_+ (s) = - \inf_{x \in M} (G_s \circ \varphi_G^s) (x) + \epsilon$ and $a_- (s) = - \sup_{x \in M} (G_s \circ \varphi_G^s) (x) - \epsilon$.
Denote by $C \subset T^* S^1 = S^1 \times \R$ the annulus $\{ (s, r) \in S^1 \times \R \mid a_- (s) < r < a_+ (s) \}$.
Then $\Phi_G (L \times S^1) \subset M \times C$.

Now consider Gromov's figure eight trick immersion $\psi \colon C \to \R^2$, see for example \cite[Section~3.3, Step~3]{polterovich:ggs01}.
This symplectic immersion takes the zero section $\{ r = 0 \}$ of $T^* S^1 = S^1 \times \R$ to a figure eight curve with ears of equal area, so that the (closed) one-form $\psi^* (p dq) - r ds = d\eta$ is exact, where $(p, q)$ are coordinates on $\R^2$, and where $d (p dq) = dp \wedge dq$ is the standard symplectic form on $\R^2$.
If $\mu$ is a smooth function on $\R^2$, then $\psi^* (p dq - d\mu) - r ds = d(\eta - \mu \circ \psi)$.
We can choose the immersion $\psi$ and a compactly supported smooth function $\mu$ so that $(\eta - \mu \circ \psi) (r, s) = - \chi (s) r s$, where $\chi \colon \R \to [0, 1]$ is a smooth function that vanishes outside of the interval $- \epsilon < s < \epsilon$.
We replace the primitive $p dq$ with the primitive $p dq - d\mu$ of $d p \wedge dq$.
Since $G_s = 0$ for $- \epsilon \le s \le \epsilon$, the composition $(\id \times \psi) \circ \Phi_G \colon L \times S^1 \to M \times \R^2$ is a coisotropic immersion.
In fact,
\[ \frac{f}{g_s \circ \iota} \left( ((\id \times \psi) \circ \Phi_G)^* (\alpha + p dq - d\mu) \right) = f \, \iota^* \alpha. \]
Strictly speaking, the latter is the closed one-form $\pi_1^* (f \, \iota^* \alpha)$ on $L \times S^1$.
Moreover, since $(\varphi_H^1 \circ \iota) (L) \cap \iota (L) = \emptyset$, the immersion $(\id \times \psi) \circ \Phi_G$ is in fact an embedding.
The (easy) proof is verbatim the same as in the Lagrangian case in \cite[Section~3.3, Step~5]{polterovich:ggs01} (possible double-points could have occurred only for pairs of points with $- \epsilon < s < \epsilon$ and $- \epsilon + 1 / 2 < s < \epsilon + 1 / 2$, respectively).

If $\| H \| = l$, then $\| G \| = 2 l$, and one can carry out the above construction so that the image $\psi (C) \subset \R^2$ is contained in a disk $B$ of area less than $2 l + 3 \epsilon$.

\begin{rmk}
In this paper, we frequently consider the projection of a product manifold to one of its factors.
In order to simplify notation, we denote all of them by the letter $\pi$ with a subscript to identify the factor to which we project.
\end{rmk}

\subsection{Deforming the product contact form}
In light of Equation~\ref{eqn:lambda} and the relation $\rho^* \lambda_0 = t \pi_2^* \alpha_0$, we wish to replace the one-form $p dq - d\mu$ considered in the previous subsection by the one-form $\frac{1}{2} (p dq - q dp) = p dq - \frac{1}{2} d (p q)$.

\begin{lem}
Let $(M, \xi = \ker \alpha)$ be a cooriented contact manifold and $(W, \omega = d\lambda)$ be an exact symplectic manifold.
Let $V \subset M$ be an open subset with compact closure, and $\lambda_t$ be a compactly supported family of closed one-forms on $W$ with $\lambda_0 = 0$.
Then there exists a compactly supported isotopy $\varphi_t$ of diffeomorphisms of $M \times W$ such that $\varphi_t^* (\pi_1^* \alpha + \pi_2^* (\lambda + \lambda_t)) = \pi_1^* \alpha + \pi_2^* \lambda$ on $V \times W$.
\end{lem}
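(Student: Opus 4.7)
The plan is to apply a contact Moser trick to the linear family $\alpha_t := \pi_1^*\alpha + \pi_2^*(\lambda + \lambda_t)$ on $M \times W$, producing a compactly supported isotopy $\varphi_t$ with $\varphi_t^*\alpha_t = \alpha_0$ on $V \times W$. Because $\lambda_t$ is closed, $d\alpha_t = d\alpha_0$, and the wedge-product computation from the subsection on products with exact symplectic manifolds adapts verbatim (the additional $\pi_2^*\lambda_t$ contributes nothing, since $\lambda_t \wedge \omega^m = 0$ for dimensional reasons) to give $\alpha_t \wedge (d\alpha_t)^{n + m - 1} = \alpha_0 \wedge (d\alpha_0)^{n + m - 1}$. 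In particular each $\alpha_t$ is contact. Since the horizontal Reeb lift $\overline{R}_\alpha$ is tangent to $M$, the one-form $\pi_2^*\lambda_t$ annihilates it, so $\alpha_t(\overline{R}_\alpha) = 1$ and $\iota_{\overline{R}_\alpha} d\alpha_t = 0$; thus $\overline{R}_\alpha$ is the common Reeb vector field of every $\alpha_t$.

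I then solve the Moser equation $\iota_{Y_t}\alpha_t = 0$ and $\iota_{Y_t} d\alpha_t = -\pi_2^*\dot\lambda_t$ for a smooth vector field $Y_t$. Since $\pi_2^*\dot\lambda_t$ vanishes on $\overline{R}_\alpha$ and $d\alpha_t$ is symplectic on $\xi_t := \ker \alpha_t$, there is a unique such $Y_t \in \xi_t$. Plugging in tangent vectors from each factor separately shows that $Y_t$ has the explicit product form $Y_t(m, w) = (c_t(w) R_\alpha(m),\, Y_t^W(w))$, where $Y_t^W$ is the vector field on $W$ satisfying $\iota_{Y_t^W}\omega = -\dot\lambda_t$ and $c_t := -(\lambda + \lambda_t)(Y_t^W)$; both $Y_t^W$ and $c_t$ are supported in a fixed compact subset of $W$. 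To make the field compactly supported on $M \times W$, I introduce a cutoff $\chi \colon M \to [0, 1]$ equal to $1$ on a precompact open neighborhood $V' \subset M$ of $\overline{V}$ (to be chosen below) and supported in a compact set, and set $X_t := \chi \cdot Y_t$. A direct calculation using $\iota_{Y_t}\alpha_t = 0$ gives $\mathcal{L}_{X_t}\alpha_t + \pi_2^*\dot\lambda_t = (1 - \chi)\, \pi_2^*\dot\lambda_t$, which vanishes on $\{\chi = 1\} \times W$; the Moser argument then yields $\varphi_t^* \alpha_t = \alpha_0$ at every point of $V \times W$ whose trajectory $\{\varphi_s\}$, $0 \le s \le t$, remains inside $\{\chi = 1\} \times W$.

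The principal technical point is to choose $V'$ so that this trajectory condition is automatic. The $W$-component of the flow of $X_t$ stays inside the compact support of $Y_t^W$, while the $M$-component moves along the Reeb orbit of its initial point with Reeb speed bounded by $\sup_{s, w} |c_s(w)| < \infty$; hence the total Reeb displacement over $t \in [0, 1]$ is uniformly bounded. Taking $V'$ to contain the (compact) Reeb saturation of $\overline{V}$ for this bounded time then guarantees that trajectories beginning in $V \times W$ never leave $V' \times W$, and the flow $\varphi_t$ of $X_t$ is the desired compactly supported isotopy.
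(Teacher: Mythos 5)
Your argument is essentially the paper's: a Gray stability/Moser computation producing exactly the same vector field (your $Y_t(m,w) = c_t(w)\,R_\alpha(m) + Y_t^W(w)$ is the paper's $X_t = Y_t + \overline{Z}_t$ with $Y_t = -\bigl((\pi_2^*(\lambda + \lambda_t))(\overline{Z}_t)\bigr)\,\overline{R}_\alpha$), followed by the same cutoff in the $M$ factor, and it is correct; in fact you are more careful than the paper, whose proof only addresses unique integrability of the cut-off field and leaves implicit the requirement that trajectories starting in $V \times W$ must remain in the region where the cutoff equals $1$. The one point you should make explicit is the parenthetical ``(compact) Reeb saturation'' of $\overline{V}$: this presupposes that the Reeb flow starting on $\overline{V}$ exists for the bounded total time $T = \int_0^1 \sup_w |c_t(w)|\,dt$, which is automatic when $R_\alpha$ is complete (or when orbits emanating from $\overline{V}$ persist for time $T$) but not for a general noncompact $M$ --- a caveat that the paper's own proof silently shares.
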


\begin{proof}
We use a standard Gray stability argument.
Suppose that $\varphi_t$ is an isotopy of diffeomorphisms with $\varphi_0 = \id$ that is generated by a (time-dependent) vector field $X_t$, and 
differentiate the desired identity $\varphi_t^* (\pi_1^* \alpha + \pi_2^* (\lambda + \lambda_t)) = f_t (\pi_1^* \alpha + \pi_2^* \lambda)$ with respect to $t$, where $f_t$ is a family of smooth functions on $M \times W$ yet to be determined.
Since $\frac{d}{dt} (\pi_1^* \alpha + \pi_2^* (\lambda + \lambda_t)) = \pi_2^* (\frac{d}{dt} \lambda_t)$ and $d(\pi_1^* \alpha + \pi_2^* (\lambda + \lambda_t)) = (\pi_1^* d\alpha + \pi_2^* d\lambda)$, it suffices to solve the equation $\pi_2^* (\frac{d}{dt} \lambda_t) = - (\pi_1^* d\alpha + \pi_2^* d\lambda) (X_t)$ for a family of vector fields $X_t$ that are tangent to $\ker (\pi_1^* \alpha + \pi_2^* (\lambda + \lambda_t))$.
Note that in this case $f_t$ is independent of $t$, and thus $f_t = f_0 = 1$.

The vector fields $X_t$ can be constructed in three steps.
Since the two-form $\omega = d\lambda$ is non-degenerate, there exists a smooth family $Z_t$ of vector fields on $W$ so that $\pi_2^* (\frac{d}{dt} \lambda_t) = - (\pi_2^* d\lambda) (\overline{Z}_t)$, where $\overline{Z}_t$ denotes the horizontal lift of $Z_t$ (i.e. the lift that is tangent to $W$).
Then define a family of smooth vector fields $Y_t$ that are tangent to $M$ by $(\pi_1^* d\alpha) (Y_t) = 0$ and $(\pi_1^* \alpha) (Y_t) = - (\pi_2^* (\lambda + \lambda_t)) (\overline{Z}_t)$, or in other words, $Y_t = - ((\pi_2^* (\lambda + \lambda_t)) (\overline{Z}_t)) \overline{R}_\alpha$.
The only thing left to show is that the family of vector fields $X_t = Y_t + \overline{Z}_t$ can be uniquely integrated for all $t$.
This can be achieved by replacing $\overline{Z}_t$ by $(\pi_1^* \kappa) \overline{Z}_t$ for a cut-off function $\kappa$ on $M$ with $\kappa = 1$ on $V$ and compact support in $M$.
\end{proof}

We invoke the lemma with $W = B$, $\lambda = \frac{1}{2} (p dq - q dp)$, and $\lambda_t = t d(\upsilon \frac{1}{2} p q - \mu)$, where $\upsilon$ is a compactly supported cut-off function on the disk $B$ with $\upsilon = 1$ on the (projection to $B$ of the) image of the suspension $(\id \times \psi) \circ \Phi_G$, and $V$ is a neighborhood of the image of the (projection to $M$ of the) suspension.
Note that even in this special case the equation in the lemma can not necessarily be solved for an isotopy that is trivial on the first factor and a Hamiltonian isotopy on the second factor.
Let $\Psi = \varphi_1^{- 1} \circ (\id \times \psi) \circ \Phi_G$, where $\varphi_1$ is as in the lemma.
Then
\[ \frac{f}{g_s \circ \iota} \left( \Psi^* \left( \alpha + \frac{1}{2} (p dq - q dp) \right) \right) = \pi_1^* (f \, \iota^* \alpha). \]
In particular, if $U$ is an open subset of $\R^{2 n - 1}$, $\overline{V} \subset U$, and the symplectic embedding $\rho$ and one-form $\lambda_0$ on $\R^{2 n + 2}$ are defined as in Equations~\ref{eqn:rho} and \ref{eqn:lambda}, then
\begin{align} \label{eqn:cohomology-class}
(\rho \circ \widehat{\Psi})^* \lambda_0 = \frac{f}{g_s \circ \iota} \left( \Psi^* \left( \alpha_0 + \frac{1}{2} (p dq - q dp) \right) \right) = \pi_1^* (f \, \iota^* \alpha_0).
\end{align}
Here $\widehat{\Psi} \colon L \times S^1 \to C (U \times B) = \R_+ \times (U \times B)$ denotes the Lagrangian lift of the coisotropic suspension $\Psi \colon L \times S^1 \to U \times B$.

\section{Construction of non-constant holomorphic disk} \label{sec:holo-disk}

\subsection{Gromov compactness and persistence principle} \label{subsec:gromov-compact}
In this subsection we review Gromov's compactness theorem and the corresponding persistence principle \cite[Section~1.5]{gromov:pcs85}.
For the sake of simplicity, we only treat the theory in the generality necessary for the applications in this paper.
A good exposition (which we mostly follow below but which does not treat all aspects of the theory either) can be found in \cite[Chapter~4]{polterovich:ggs01}.
The standard reference for the general theory is the book \cite{mcduff:hcs04}.

Denote by $D$ the unit disk in the complex plane $\C$.
Let $L \subset \C^n$ be a closed Lagrangian submanifold, and $v \colon D \times \C^n \to \C^n$ be a smooth map that is bounded together with all of its derivatives.
Fix a relative homology class $\beta \in H_1 (\C^n, L)$, and consider the problem $P (L, v, \beta)$ of finding a smooth map $u \colon (D, \p D) \to (\C^n, L)$ such that $\pbar u (\zeta) = v (\zeta, u (\zeta))$ and $[ u ] = \beta$.
Assume that $\{ v_k \}$ is a sequence of smooth functions that $C^\infty$-converges to a smooth function $v$, and that $u_k$ are solutions of the corresponding problems $P (L, v_k, \beta)$.
Gromov's compactness theorem states that there exists a subsequence of $\{ u_k \}$ that converges either to a solution of $P (L, v, \beta)$ or Gromov converges (or converges weakly) to a so-called cusp solution.
In particular, if the problem $P (L, v, \beta)$ has no solution, then bubbling off takes place.
That means that there exists a decomposition $\beta = \beta' + \beta_1 + \cdots + \beta_m$, $\beta_j \not=0$, $m \ge 1$, a solution $u$ of $P (L, v, \beta')$, and solutions $w_j$ of $P (L, 0, \beta_j)$.
The latter are therefore (non-constant) holomorphic disks $(D, \p D) \to (\C^n, L)$.

For later reference, we take a closer look at the notions of cusp curve and Gromov convergence.
Consider a finite system of disjoint simple curves $\gamma_\nu$ that are either closed and lie in the interior of $D$ or have endpoints (and their endpoints only) on the boundary $\p D$.
Denote by $D^0$ the surface (with boundary) obtained from $D \, \backslash \bigcup \gamma_\nu$ by the one-point compactification at every end of $D \, \backslash \bigcup \gamma_\nu$, and by $\overline{D}$ the space obtained from $D$ by shrinking every curve $\gamma_\nu$ to a single point.
Topologically, $\overline{D}$ is a disk with spheres attached at points in its interior, and disks attached at points on its boundary.
There is an obvious map $D^0 \to \overline{D}$, and the Cauchy-Riemann operator $\pbar$ can be defined on a cusp curve $\overline{u} \colon \overline{D} \to \C^n$ by means of the composition $D^0 \to \overline{D} \to \C^n$.
Gromov convergence now means that the following conditions are met.
The maps $u_k$ converge to $\overline{u}$ uniformly on compact subsets of $D \, \backslash \bigcup \gamma_\nu$, and uniformly (on $D$) to the composed map $D \to \overline{D} \to \C^n$, where the first map is the obvious quotient map, and the second map is $\overline{u}$.
Note that for topological reasons there exist no non-constant holomorphic spheres in $\C^n$ and hence in the above decomposition.
Here the maps $u$ and $w_j$ considered above are defined on the (one-point compactifications of the) components of $D \, \backslash \bigcup \gamma_\nu$ (with the latter being defined on the bubbles).
Finally, the areas of the maps $u_k$ converge to the area of the map $\overline{u}$, which is equal to the sum $\area (u) + \sum_j \area (w_j)$.
Again for later reference, recall that the area of a smooth map $w \colon D \to \C^n$ satisfies the inequalities \[ \omega ([w]) = \int_D w^* \omega \le \area (w) \le 2 \int_D | \pbar w |^2 dx dy + \omega ([w]). \]
In particular, the area of a holomorphic map coincides with its symplectic area.

As a consequence of his compactness theorem, Gromov proved a persistence principle that stipulates the following.
Given a generic family $v_s$, $s \in [0, 1]$, with $v_0 = 0$, either the problem $P (L, v_s, 0)$ has a solution for all $s$, or bubbling off occurs at some $s_\infty \le 1$, that is, there exists a subsequence $s_k \to s_\infty$ such that a sequence of solutions of $P (L, v_{s_k}, 0)$ converges to a cusp solution of $P (L, v_{s_\infty}, 0)$.

\subsection{A priori bounds on the size of the image of a holomorphic curve}
Gromov compactness depends on the fact that holomorphic curves into $\C^n$ (and other non-compact symplectic manifolds that are convex at infinity) cannot leave an appropriate compact subset, and a quantitative version of this is required below.
Let $B^{2 n} = \{ z \in \C^n \mid | z - \tau | < T \}$ denote the ball of radius $T > 0$ centered at $\tau \in \C^n$, and let $w \colon D \to \C^n$ be a holomorphic disk so that $w (\p D) \subset B$.
Then a standard convexity argument shows that the image $w (D)$ is also contained in $B$, cf.\ the proof of Proposition~9.2.16 in \cite{mcduff:hcs04}.
In particular, the image of $w$ is contained in the strip $\{ z \in \C^n \mid \tau_1 - T < z_1 < \tau_1 + T \}$, where the subscript $1$ denotes the first real coordinate of a complex vector.

\subsection{Disks with boundary on coisotropic submanifolds} \label{subsec:disks}
Let $U$ be an open subset of $\R^{2 n - 1}$, and $\iota \colon L \hookrightarrow U$ be a coisotropic embedding.
Denote by $SL$ the image of the Lagrangian lift $\rho \circ \widehat{\Psi} \colon L \times S^1 \to C (U \times B) \to \R^{2 n + 2}$, where $B \subset \R^2$ is a disk of radius $R$ with $\pi R^2 < 2 l + 3 \epsilon$.
Identify $\R^{2 n + 2}$ with $\C^{n + 1} = \C^n \times \C$, and consider as in subsection~\ref{subsec:gromov-compact} the problem $P (SL, v, 0)$ of finding a smooth map $u \colon (D, \p D) \to (\C^{n + 1}, SL)$ such that $\pbar u (\zeta) = v (\zeta, u (\zeta))$ and $[ u ] = 0$.
In this case, the appropriate function $v$ to consider is $v (\zeta, \rho (t, z, x, y, p, q)) = (0, \ldots, 0, \sqrt{t} \, \sigma)$ on the cone $C (\R^{2 n + 1})$, where $\sigma \in \C$ is a constant.
Of course the latter and its derivative are not bounded, so that one does not have the a priori area and first derivative bounds that are necessary for Gromov compactness.
We rectify this situation by replacing $\sqrt{t}$ by a smooth function $\Lambda \colon \C^{n + 1} \to \R$ that is bounded together with all of its derivatives, and satisfies $\Lambda (\rho (t, z, x, y, p, q)) = \sqrt{t}$ on the subset $\rho ([T_1, T_2] \times \R^{2 n + 1}) = \{ z \in \R^{2 n + 2} \mid T_1 < z_1 < T_2 \}$, where $0 < T_1 < T_2$ are constants to be specified momentarily.

Let $u$ be a solution of $P (SL, v, 0)$.
Observe that the image $u (\partial D)$ is contained in the subset
\[ \left\{ z \in \C^{n + 1} \left| \min_{(x,s) \in L \times S^1} \frac{f (x)}{(g_s \circ \iota) (x)} < z_1 < \max_{(x,s) \in L \times S^1} \frac{f (x)}{(g_s \circ \iota) (x)} \right. \right\}, \]
and that the projection $\pi_1 \circ u \colon D \to \C^n$ is holomorphic.
Apply the a priori bound in the previous subsection with a sufficiently large ball that is contained in the half-space $\{ z \in \C^n \mid z_1 > 0 \}$ and in turn contains (the projection to $\C^n$) of the Lagrangian submanifold $SL$.
That implies the existence of constants $0 < T_1 < T_2$ so that the image of every solution $u$ of $P (SL, v, 0)$ is contained in the subset $\{ z \in \C^{n + 1} \mid T_1 < z_1 < T_2 \}$.

\begin{lem}
Let $\sigma \in \C$ so that $| \sigma | > R$.
Then the problem $P (SL, v, 0)$ with $v$ as above has no solutions.
\end{lem}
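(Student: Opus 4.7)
The strategy is to adapt Gromov's classical winding-number argument for closed Lagrangians in $\C^n$: if $L \subset \C^n$ is contained in a ball of radius $R$, then for $|\sigma| > R$ the problem $\pbar u = (0, \ldots, 0, \sigma)$ with $u(\p D) \subset L$ and $[u] = 0$ admits no solution, because the holomorphic function $h = u_n - \sigma \bar\zeta$ would have to wind $-1$ times around the origin on $\p D$. Here the first $n$ components of $u$ are holomorphic while the last satisfies the inhomogeneous equation $\pbar u_{n+1}(\zeta) = \sqrt{t(\zeta)} \, \sigma$, where $t(\zeta) = \mathrm{Re}(u_1(\zeta))$ is harmonic with values in $(T_1, T_2)$.

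First, I compute the symplectic area of $u$ by Stokes' theorem. The identity $(\rho \circ \widehat{\Psi})^* \lambda_0 = \pi_1^* (f \, \iota^* \alpha_0)$ from Equation~\ref{eqn:cohomology-class} exhibits the boundary pullback of $\lambda_0$ as the pullback of a closed one-form on $L \times S^1$, and the condition $[u] = 0$ forces the boundary cycle to be null-homologous in $L \times S^1$, so $\int_D u^* \omega_0 = 0$. The area inequality from subsection~\ref{subsec:gromov-compact} combined with the mean value property for the harmonic function $t$ then yields $\area(u) \le 2 \int_D |\pbar u|^2 \, dA = 2 |\sigma|^2 \int_D t \, dA = 2 \pi |\sigma|^2 t(0)$.

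Next, I apply the winding argument to $u_{n+1}$. The rescaled function $\tilde u_{n+1} := u_{n+1} / \sqrt{t}$ has $|\tilde u_{n+1}| < R$ on $\p D$, since $\widehat{\Psi}$ sends $\p D$ into $L \times S^1$ with last factor in $\psi(C) \subset B$; equivalently $|u_{n+1}| \le \sqrt{t} \, R$ on $\p D$. Decomposing $u_{n+1} = h + G$ with $h$ holomorphic on $D$ and $G(\zeta) = -\frac{\sigma}{\pi} \int_D \frac{\sqrt{t(w)}}{w - \zeta} \, dA(w)$ the Cauchy--Pompeiu particular solution of $\pbar G = \sqrt{t} \, \sigma$, a Fourier expansion on $\p D$ gives $G|_{\p D}(\zeta) = \sum_{m \ge 1} \frac{\sigma}{\pi} \bar\zeta^m \int_D w^{m-1} \sqrt{t} \, dA$. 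In the special case $t \equiv T$ constant this collapses to $\sqrt{T} \, \sigma \bar\zeta$, and the bound $|u_{n+1}| \le \sqrt{T} \, R$ together with $|\sigma| > R$ forces $h = u_{n+1} - \sqrt{T} \, \sigma \bar\zeta$ to wind $-1$ times around the origin on $\p D$, contradicting that $h$ is holomorphic on $D$.

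The main obstacle is handling general non-constant $t$, since then the higher Fourier modes of $G|_{\p D}$ also contribute and the winding of $-G$ is no longer cleanly equal to $-1$. To close the gap I expect to pass to the pre-cone picture via $\rho^{-1}$: setting $\tilde u = \rho^{-1} \circ u$ transforms the equation into $\pbar_{J'} \tilde u = (0, \ldots, 0, \sigma)$ with constant right-hand side in the last $\C$ factor, for the pulled-back almost complex structure $J' = (d\rho)^{-1} J_0 (d\rho)$ which restricts to the standard complex structure on the last $\C$ factor. The resulting twisted scalar equation reads $\pbar \tilde u_{n+1} = \sigma - \frac{\overline{u_1'}}{4 t} \, \tilde u_{n+1}$; combined with the boundary bound $|\tilde u_{n+1}| < R$ on $\p D$, and controlling the twist term using the holomorphicity of $u_1$ and harmonicity of $t$, one should recover the winding contradiction exactly when $|\sigma| > R$.
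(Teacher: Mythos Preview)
Your proof is incomplete, and you say so yourself: the winding argument of paragraph two only handles constant $t$, and paragraph three proposes a remedy (passing to $\tilde u_{n+1} = u_{n+1}/\sqrt{t}$ via $\rho^{-1}$) but ends with the twisted equation $\pbar\tilde u_{n+1} = \sigma - \frac{\overline{u_1'}}{4t}\,\tilde u_{n+1}$ and the phrase ``one should recover the winding contradiction.'' That twist term is exactly the obstruction you set out to remove, and you give no mechanism for controlling it; the proposal stops at the point where the work begins.

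The paper's proof sidesteps the winding-number argument entirely. It too works with the rescaled last coordinate $\Pi\circ\phi$ (your $\tilde u_{n+1}$), but instead of counting zeros of a holomorphic function it integrates $2\,\pbar(\Pi\circ\phi)$ over $D$ and applies Stokes to convert to a boundary integral:
\[ \int_D 2\,\pbar(\Pi\circ\phi)\,dx\,dy \;=\; \int_{\partial D}(\Pi\circ\phi)\,(dy - i\,dx) \;=\; 2\pi\int_0^1 e^{2\pi i\theta}\,(\Pi\circ\phi)(e^{2\pi i\theta})\,d\theta, \]
whose modulus is at most $2\pi R$ since $|\Pi\circ\phi| \le R$ on $\partial D$. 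The left side is identified with $2\pi\sigma$ via the relation $\Pi_*\pbar\phi = \sigma$ (the planar vector $\pbar\phi = \sqrt{t}\,\sigma$ is rescaled by $1/\sqrt{t}$ under $d\Pi$), following \cite[Lemma~4.3.A]{polterovich:ggs01}. The point is that only the single Fourier coefficient at frequency $-1$ is extracted, and that already forces $|\sigma|\le R$; no control over the higher modes of your Cauchy--Pompeiu expansion, and no winding count, is needed. Your paragraph three is heading toward the same rescaled coordinate the paper uses, but you then try to run the topological argument again rather than the integral one.

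Your first paragraph, computing the symplectic area of $u$ from $[u]=0$ and Equation~\ref{eqn:cohomology-class}, is correct but does not bear on the lemma; that estimate is used \emph{after} the lemma in the paper to bound the area of the bubbles, not to establish non-existence.
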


\begin{proof}
Suppose that $u$ is a solution of $P (SL, v, 0)$, and denote by $\phi$ its last complex coordinate.
Recall the projection map $\Pi \colon C(\R^{2 n + 1}) \to \R^{2 n + 2} = \C^{n + 1}$ defined in subsection~\ref{subsec:symplectization}.
Since $\sigma$ is a planar vector, we have $\p (\Pi \circ \phi) / \p x + i (\p (\Pi \circ \phi) / \p y) = 2 \Pi_* \pbar \phi = 2 \sigma$.
Moreover, $| (\Pi \circ \phi) |_{\p D} | \le R$ since $u (\p D) \subset SL$.
By the same argument as in the proof of \cite[Lemma~4.3.A]{polterovich:ggs01}, we obtain the identity
\[ 2 \pi \sigma = \int_D 2 \pbar (\Pi \circ \phi) dx dy = \int_D d ((\Pi \circ \phi) dy - i (\Pi \circ \phi) dx) = \int_{\p D} (\Pi \circ \phi) dy - i (\Pi \circ \phi) dx \]
and the upper bound
\[ \left| \int_{\p D} (\Pi \circ \phi) dy - i (\Pi \circ \phi) dx \right| = 2 \pi \left| \int_0^1 e^{2 \pi i \theta} (\Pi \circ \phi) (e^{2 \pi i \theta}) d\theta \right| \le 2 \pi R, \]
and thus $| \sigma | < R$.
\end{proof}

Apply Gromov's persistence principle to the family $s v$, $s \in [0, 1]$, where $v$ is as in the lemma.
For the sake of simplicity, we assume that bubbling off happens for the family $s v$ (rather than a $C^\infty$-small perturbation); all of the estimates below hold up to $\epsilon > 0$, and therefore the argument goes through without significant changes.
We then obtain a sequence $s_k \to s_\infty \le 1$, a decomposition $0 = \beta' + \beta_1 + \cdots + \beta_m$, $\beta_j \not= 0$, solutions $u_k$ of $P (SL, s_k v, 0)$, $u$ of $P (SL, s_\infty v, \beta')$, and holomorphic disks $w_1, \ldots, w_m$, $m \ge 1$, with $[ w_j ] = \beta_j$, such that the sequence $u_k$ Gromov converges to a cusp solution $\overline{u}$ with the above data.

We deduce the following estimates.
The area of $\Pi \circ u_k$ is bounded from below by
\[ \area (\Pi \circ u_k) \le 2 \int_D | \pbar (\Pi \circ u_k) |^2 dx dy = 2 \pi s_k^2 | \sigma |^2 \le 2 \pi | \sigma |^2. \]
On the other hand, the area of $(\Pi \circ w_1)$ is less than the area of $(\Pi \circ u_k) + \epsilon$ provided that $k$ is sufficiently large.
This follows from the convergence of the area and the uniform convergence on compact subsets in the Gromov convergence of the sequence $u_k$ to the cusp curve $\overline{u}$.
Finally, since $[ w_1 ] = \beta_1 \not= 0$, the curve $w_1$ is non-constant, and in particular has positive area.
Since its boundary lies on $SL$, the definition of the map $\Pi$ and Equation~\ref{eqn:suspension-lift} for the lift of a coisotropic suspension imply that the area of the disk $w = \Pi \circ w_1$ is also positive.
Note that the above argument holds for any $\sigma \in \C$ with $| \sigma | > R$.

\begin{proof}[Proof of Theorem~\ref{thm:non-constant-disk}]
The map $w \colon (D, \p D) \to (\R^{2 n + 1}, \Psi (L \times S^1))$ constructed in the preceding paragraphs is precisely the holomorphic disk in the statement of the theorem.
Its area $a$ satisfies $0 < a < 4 \| H \| + \epsilon$.

The map $\p D \to L$ in the last part of the statement is given by the composition $\pi_1 \circ \Psi^{-1} \circ (w |_{\p D})$, where $\pi_1 \colon L \times S^1 \to L$ as before is the projection.
This map is unchanged if we replace $w$ by $w_1$ and $\Psi$ by its Lagrangian lift $\rho \circ \widehat{\Psi}$ into $\R^{2 n + 2}$.
Then by Equation~\ref{eqn:cohomology-class}
\[ \int_{(\pi_1 \circ (\rho \circ \widehat{\Psi})^{-1} \circ w_1) (\p D)} f \, \iota^* \alpha = \int_{w_1 (\p D)} \lambda_0 = \int_{w_1 (D)} \omega_0 = \area (w_1) \not= 0, \]
and thus the map $\p D \to L$ represents a non-trivial element of $H_1 (L, \R)$.
\end{proof}

\begin{proof}[Proof of Theorem~\ref{thm:empty-shape}]
Let $H \colon [0, 1] \times \R^{2 n - 1} \to \R$ be a Hamiltonian function whose contact vector field $X_H$ is nowhere tangent to $\jmath (L)$.
If we cut $H$ off outside an arbitrary neighborhood of $\jmath (L)$ and then multiply $H$ by a (small) positive constant, the resulting contact vector field is still nowhere tangent to $\jmath (L)$.
Thus we may assume that $H$ is compactly supported in a tubular neighborhood $U$ of $\jmath (L)$ so that the closure $\overline{U} = K$ is compact, and that the inequality $4 \| H \| < b (K)$ holds, where $b (K)$ is as in Lemma~\ref{topo-area-bound}.
Moreover, since $(\varphi_H^1 \circ \jmath) (L) \cap \jmath (L) = \emptyset$ and $L$ is compact, there exists a tubular neighborhood $N$ of $\jmath (L)$ that is displaced by $\varphi_H^1$ as well.

In order to derive a contradiction, suppose $\iota \colon L \to N$ is a coisotropic embedding so that the induced homomorphism $\iota_* \colon H_1 (L, \R) \to H_1 (N, \R)$ is injective (i.e.\ an isomorphism).
By Theorem~\ref{thm:non-constant-disk}, there exists $\epsilon > 0$ and a non-constant holomorphic disk $w$ of area $a < 4 \| H \| + \epsilon < b (K)$ whose boundary is non-trivial in $H_1 (L, \R)$.
Since the area of $w$ is larger than the area of $\pi_1 \circ w$, we must have $a \ge b (K)$, which is the desired contradiction.
\end{proof}

\section{Rational classes and energy-capacity type inequalities} \label{sec:limitations}

Let $\iota \colon L \hookrightarrow U \subset \R^{2 n - 1}$ be a coisotropic embedding as in Theorem~\ref{thm:non-constant-disk}, and assume that the cohomology class $z = [f \, \iota^* \alpha_0] \in H^1 (L, \R)$ is rational.
Recall that this means that the subgroup $z (H_1 (L, \Z)) \subset \R$ is discrete.
Note that this definition is invariant under rescaling by a non-zero constant.
Denote by $\gamma > 0$ the positive generator of the group $z (H_1 (L, \Z))$.
Then by Equation~\ref{eqn:cohomology-class}, the area of $w_1$ belongs to the image $z (H_1 (L, \Z))$, and thus $\area (w_1) \ge \gamma > 0$.
In the Lagrangian analog of Theorem~\ref{thm:non-constant-disk}, one can therefore bound the quantity $4 \| F \|$ from below by the generator $\gamma$, where $F$ is a Hamiltonian that displaces the rational Lagrangian embedding.
(In fact, by the decomposition $0 = \beta' + \beta_1 + \cdots + \beta_m$, $\beta_j \not= 0$ in the Gromov convergence in subsection~\ref{subsec:disks}, one has either $\beta' \not= 0$ or $m > 1$, so that one can improve the factor $4$ to a factor $2$.)
This gives rise to an energy-capacity type inequality bounding the norm $\| F \|$ from below by the symplectic size of a subset that is displaced by the time-one map $\varphi_F^1$, see \cite[Subsection~3.2.D]{polterovich:ggs01}.
In this section we briefly discuss the limitations of this argument for coisotropic embeddings.

The problem arises from the fact that the area of the disk $w_1$ does not necessarily belong to the subgroup $z (H_1 (L, \Z))$, where $z = [f \, \iota^* \alpha_0] \in H^1 (L, \R)$ is as above.
(It is not difficult to see that $f \not= 1$, see e.g.\ \cite{mueller:csc16}.)
One can however bound the area of $w_1$ from above by $C (f) \, \area (w)$, where $C (f)$ is a constant that depends on the $C^1$-norm of $f$, and thus derive the inequality $0 < \gamma / C (f) \le 4 \| H \| + \epsilon$.
One can choose this constant so that the ratio $\gamma / C (f)$ is invariant under rescaling $f$ by a positive constant.
One can improve the constant $C (f)$ further by considering the Lagrangian lift of $\iota$ into the symplectization and a suitable cut-off of the Hamiltonian $F = \pi_2^* H$.
In that case the constant $C (f)$ depends only on $\max (f)$.

This estimate however has limited practical applications, since there is no control over (the magnitude of) the function $f$ so that the one-form $f \, \iota^* \alpha_0$ is closed.
Indeed, for any $x_0 \in L$ and $\epsilon > 0$ an arbitrary constant, one easily constructs a compactly supported contact diffeomorphism $\psi \colon U \to U$ so that $\psi^* \alpha_0 = h \alpha_0$ and $h (\iota (x_0)) = \epsilon$.
Then the embedding $\psi \circ \iota \colon L \hookrightarrow U$ is coisotropic, and $(f / h \circ \iota) (\psi \circ \iota)^* \alpha_0 = f \, \iota^* \alpha_0 = z$.
If $f$ assumes its maximum at $x_0 \in L$, then $\max (f / h \circ \iota) \ge \max (f) / \epsilon$.
Thus the above estimate does not provide an obstruction to coisotropic embeddings in terms of the positive generator $\gamma$ of a rational cohomology class $z \in H^1 (L, \R)$.

\section{Proof of Theorem~\ref{thm:non-exact-coiso}} \label{sec:proof}

The first result of this section is a straightforward generalization of Gromov's theorem that there exists no exact Lagrangian embedding into standard symplectic $(\R^{2 n}, \omega_0 = d\lambda_0)$ \cite[Corollary~2.3.B$_{\textrm 2}$]{gromov:pcs85}.

\begin{thm}
Let $(X, \omega = d\lambda)$ be an exact symplectic manifold without boundary of dimension $2 n - 2$, and let $W = X \times \R^2$ with the product symplectic structure $\pi_1^* \omega + \pi_2^* \omega_0$.
Let $\iota \colon L \hookrightarrow W$ be a Lagrangian embedding.
Then the cohomology class $[\iota^* (\pi_1^* \lambda + \pi_2^* \lambda_0)]$ does not belong to the image of the induced homomorphism $\iota^* \colon H^1 (W, \R) \to H^1 (L, \R)$.
In other words, if $Q \colon H^1 (L, \R) \to H^1 (L, \R) / \textrm{Im} (\iota^*)$ denotes the canonical quotient map, then $Q ([\iota^* (\pi_1^* \lambda + \pi_2^* \lambda_0)]) \not= 0$.
\end{thm}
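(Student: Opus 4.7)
Abbreviate $\lambda_W = \pi_1^* \lambda + \pi_2^* \lambda_0$, so $\omega_W = d \lambda_W$. The plan is to argue by contradiction along the lines of Gromov's non-existence proof for exact Lagrangian embeddings in $\R^{2 n}$, using the $\R^2$ factor of $W$ as a built-in source of displacement. Suppose for contradiction that $[\iota^* \lambda_W] \in \textrm{Im} (\iota^*)$. Then there exist a closed one-form $\tau$ on $W$ and a smooth function $G$ on $L$ with $\iota^* \lambda_W = \iota^* \tau + d G$. Since $\tau$ is closed, $\lambda_W - \tau$ is still a primitive of $\omega_W$, and $\iota$ is exact with respect to this new primitive.

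Since $L$ is closed, the image $\iota (L) \subset W$ is compact, and in particular its projection $\pi_2 (\iota (L))$ lies in some disk in $\R^2$. Choose a compactly supported Hamiltonian $F \colon [0, 1] \times W \to \R$ of the form $F_s (y, p, q) = \chi (y, p, q) \, p$ for a cut-off function $\chi$ that equals $1$ on a sufficiently large neighborhood of $\iota (L)$. The time-one flow of $F$ translates $\iota (L)$ in the $q$-direction, and for a large enough translation $(\varphi_F^1 \circ \iota) (L) \cap \iota (L) = \emptyset$. Applying the Lagrangian version of the suspension/double/figure-eight construction of Section~\ref{sec:suspension} --- this is the purely Lagrangian case underlying the coisotropic construction carried out in the body of the paper --- together with Gromov's persistence principle as in Subsection~\ref{subsec:disks}, one obtains a non-constant $J$-holomorphic disk $w \colon (D, \p D) \to (W, \iota (L))$ of positive, finite area.

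Let $\tilde{w} \colon \p D \to L$ denote the lift of $w |_{\p D}$ through $\iota$. By Stokes' theorem and the boundary condition,
\begin{align*}
0 < \area (w) = \int_D w^* \omega_W = \int_{\p D} \tilde{w}^* \iota^* \lambda_W = \int_{\p D} \tilde{w}^* (\iota^* \tau + d G) = \int_{\p D} w^* \tau.
\end{align*}
However, $w^* \tau$ is a closed one-form on the simply connected disk $D$, hence exact, so its integral over $\p D$ vanishes. This is the desired contradiction.

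The main technical obstacle I expect is verifying that the Gromov compactness and a priori confinement arguments of Subsection~\ref{subsec:disks} go through in the possibly non-compact ambient $W = X \times \R^2$. The usual maximum principle on the $\R^2$ factor handles that direction just as in Subsection~\ref{subsec:disks}; for the $X$ direction one works with an $\omega$-compatible almost complex structure that is standard near infinity of the $\R^2$ factor and adapted to a compact neighborhood of $\iota (L)$ in $X$. Together with the compactness of the figure-eight disk $B \subset \R^2$, this confines all relevant holomorphic curves to a fixed compact subset of $W$, so that the earlier arguments apply essentially verbatim.
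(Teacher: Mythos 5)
Your opening reduction---replacing the primitive $\pi_1^* \lambda + \pi_2^* \lambda_0$ by $\pi_1^* \lambda + \pi_2^* \lambda_0 - \tau$ so that $\iota$ becomes an exact Lagrangian---is correct, and your concluding Stokes computation is fine \emph{for a disk with boundary on $\iota (L)$}. The genuine gap is in the middle: the suspension/double/figure-eight construction combined with the persistence principle does \emph{not} produce a disk with boundary on $\iota (L) \subset W$. It produces a non-constant holomorphic disk $w_1$ with boundary on the suspended Lagrangian $\Phi (L \times S^1) \subset W \times \R^2$; compare Theorem~\ref{thm:non-constant-disk}, where the boundary condition is $\Psi (L \times S^1)$, and subsection~\ref{subsec:disks}, where even after applying the projection $\Pi$ the boundary remains on the suspension, not on the original submanifold. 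So your displayed chain of equalities is applied to a map $(D, \p D) \to (W, \iota (L))$ that the machinery you cite has not supplied. To close the argument along your route, you must transfer exactness to the suspension: with the figure-eight immersion normalized so that the ears have equal area and the primitive corrected as in Section~\ref{sec:suspension} (the term $\eta - \mu \circ \psi$), exactness of $\iota$ with respect to the modified primitive implies that the suspension is an exact Lagrangian in $W \times \R^2$, and then $0 < \area (w_1) = \int_{\p D} w_1^* (\textrm{primitive}) = 0$ gives the contradiction. You should also explicitly rule out sphere bubbles in the Gromov limit (in $\C^n$ this is topological, but in $W \times \R^2$ it is not automatic); here it is immediate because the ambient symplectic form is exact, but it must be said.

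It is worth comparing with the paper's proof, which is far shorter and avoids displacement altogether: it quotes Gromov's result for split manifolds $X \times \R^2$ (2.3.B$_3$ in his paper), which directly yields either a non-constant $J$-holomorphic disk with boundary on $\iota (L)$ (for a split $J = J_X \oplus J_0$) or a non-constant holomorphic sphere in $X$; the sphere is excluded by exactness of $\omega$, and then the positivity of $\int_{S^1} \iota^* (\lambda \oplus \lambda_0) = \area (w)$, against the vanishing of $\int_{S^1} \iota^* \theta = \int_{w (D)} d\theta = 0$ for every closed one-form $\theta$ on $W$, finishes the proof with no contradiction setup. Notably, your own observation that $\pi_2 (\iota (L))$ lies in a bounded disk of $\R^2$ is precisely what powers this direct argument: the perturbed problem $\pbar u = (0, \ldots, 0, \sigma)$ with $| \sigma |$ exceeding the radius of that disk has no solutions by the mean-value estimate of subsection~\ref{subsec:disks}, so bubbling must occur and the bubble is a disk on $\iota (L)$ itself---no displacing Hamiltonian, suspension, or figure eight is needed. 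Finally, the confinement worry you flag in the $X$ direction is real for general non-compact $X$, but "an almost complex structure adapted to a compact neighborhood of $\iota (L)$" does not by itself confine curves; one needs bounded geometry together with monotonicity, or convexity at infinity. The paper operates at the same level of rigor here by deferring to Gromov's citation, so this is not a defect specific to your write-up, but the sentence as phrased does not constitute an argument.
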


\begin{proof}
Gromov compactness \cite[Subsection~2.3.B$_{\textrm 3}$]{gromov:pcs85} implies the existence of either a non-constant $J$-holomorphic disk $w \colon (D, \partial D) \to (W, \iota (L))$ or a non-constant $J_X$-holomorphic sphere $S^2 \to X$, where $J = J_X \oplus J_0$ is an auxiliary almost complex structure on $W = X \times \R^2$ that is standard on the second factor.
The latter possibility however is excluded by exactness of $\omega$.
Let $S^1$ denote a cycle in $L$ such that $\iota (S^1) = w (\partial D)$.
Then $\int_{S^1} \iota^* (\lambda \oplus \lambda_0) = \area (w) \not= 0$.
On the other hand, if $\theta$ is a closed one-form on $W$, then $\int_{S^1} \iota^* \theta = \int_{\iota (S^1)} \theta = \int_{w (D)} d\theta = 0$.
\end{proof}

As a special case, we state the following theorem.

\begin{thm} \label{thm:non-exact-lag}
Consider $W = T^*S^1 \times \R^{2 n}$ with its standard symplectic structure $d(\pi_1^* (r \, ds) + \pi_2^* \lambda_0)$, and suppose that $\iota \colon S^1 \times L \hookrightarrow W$ is a Lagrangian embedding so that $\iota^* = \iota_0^* \colon H^1 (W, \R) \to H^1 (S^1, \R) \oplus H^1 (L, \R)$, where $\iota_0$ denotes the canonical map $S^1 \times L \to S^1 \times 0 \to T^*S^1 \times \R^{2 n}$.
Consider the corresponding cohomology class $[\iota^* (\pi_1^* (r \, ds) + \pi_2^* \lambda_0)] = (A, Z) \in H^1 (S^1, \R) \oplus H^1 (L, \R)$.
Then $Z \not= 0$.
\end{thm}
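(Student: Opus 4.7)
The plan is to deduce Theorem~\ref{thm:non-exact-lag} directly from the preceding theorem together with a standard cohomology computation. Setting $X = T^*S^1$ and $\lambda = r\,ds$ in the first theorem of Section~\ref{sec:proof}, one immediately obtains that the class $[\iota^* (\pi_1^* (r \, ds) + \pi_2^* \lambda_0)] \in H^1(S^1 \times L, \R)$ does \emph{not} lie in the image of $\iota^* \colon H^1(W, \R) \to H^1(S^1 \times L, \R)$. My task is then to translate this non-belonging into the statement $Z \not= 0$ for the Künneth decomposition $(A, Z) \in H^1(S^1, \R) \oplus H^1(L, \R)$.

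First I would compute $H^1(W, \R)$. Since $W = T^*S^1 \times \R^{2n}$ deformation retracts onto $S^1 \times 0 \times 0 = S^1$, the cohomology $H^1(W, \R) \cong H^1(S^1, \R) \cong \R$ is one-dimensional, generated (up to identification with the base) by the class of $ds$ pulled back under $W \to T^*S^1 \to S^1$. Next, I would use the description of the canonical map $\iota_0 \colon S^1 \times L \to S^1 \times 0 \subset W$ to observe that $\iota_0^*$ sends this generator to $([ds], 0) \in H^1(S^1, \R) \oplus H^1(L, \R)$. Consequently $\mathrm{Im}(\iota_0^*) = H^1(S^1, \R) \oplus 0$.

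Now I invoke the hypothesis $\iota^* = \iota_0^*$ on $H^1(W, \R)$, which forces $\mathrm{Im}(\iota^*) = H^1(S^1, \R) \oplus 0$. Combining this with the conclusion of the preceding theorem, the class $(A, Z)$ must fail to lie in $H^1(S^1, \R) \oplus 0$. Equivalently, its second Künneth component satisfies $Z \not= 0$, which is the claim.

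There is no serious obstacle here; the argument is essentially bookkeeping. If anything, the only subtlety is verifying that the hypothesis $\iota^* = \iota_0^*$ combined with the one-dimensionality of $H^1(W, \R)$ pins down the image of $\iota^*$ exactly as $H^1(S^1, \R) \oplus 0$ rather than merely containing it; this is immediate once one notes that $\iota_0^*$ sends a generator to a generator of that subspace.
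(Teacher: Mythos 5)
Your overall route is the paper's own: the paper states Theorem~\ref{thm:non-exact-lag} merely ``as a special case'' of the first theorem of Section~\ref{sec:proof} and supplies no further argument, so the cohomological bookkeeping you carry out ($H^1(W, \R) \cong H^1(S^1, \R)$ is one-dimensional since $W$ retracts onto the zero section circle, $\mathrm{Im}(\iota_0^*) = H^1(S^1, \R) \oplus 0$, and hence non-membership of $(A, Z)$ in $\mathrm{Im}(\iota^*) = \mathrm{Im}(\iota_0^*)$ is equivalent to $Z \not= 0$) is precisely the verification the paper leaves implicit, and it is correct.

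There is, however, one concrete misstep in your instantiation of the general theorem: you set $X = T^*S^1$ and $\lambda = r \, ds$, but that theorem applies to $W = X \times \R^2$ --- the second factor must be two-dimensional, because the Gromov-compactness argument producing the non-constant disk is run with an almost complex structure $J = J_X \oplus J_0$ that is standard on the $\R^2$ factor, and because non-constant holomorphic spheres are excluded only in the exact factor $X$. With your choice the general theorem covers only $W = T^*S^1 \times \R^2$, i.e.\ the case $n = 1$; for $n \ge 2$ the application does not type-check. The repair is immediate: take $X = T^*S^1 \times \R^{2 n - 2}$ with primitive $\pi_1^* (r \, ds) + \lambda_0'$, where $\lambda_0 = \lambda_0' \oplus \lambda_0''$ is the evident splitting of the primitive on $\R^{2 n} = \R^{2 n - 2} \times \R^2$ (each summand in Equation~\ref{eqn:lambda} lives on a single $\R^2$ factor), and let the remaining $\R^2$ factor with primitive $\lambda_0''$ play the role of the second factor. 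Then $X$ is an exact symplectic manifold without boundary, $W = X \times \R^2$ is exactly $T^*S^1 \times \R^{2 n}$ with the same symplectic form and the same global primitive, and the rest of your argument goes through verbatim.
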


\begin{proof}[Proof of Theorem~\ref{thm:non-exact-coiso}]
Recall that $\alpha_{\textrm{can}} = dz + \frac{1}{2} \sum_{j = 1}^n (x_j dy_j - y_j dx_j)$.
We can identify the symplectization $\R_+ \times (S^1 \times \R^{2 n})$ with a subset of $T^* S^1 \times \R^{2 n}$ via a symplectic embedding $\rho$ that is defined verbatim as in Equation~\ref{eqn:rho}.
The statement then follows directly from Theorem~\ref{thm:non-exact-lag} applied to the Lagrangian embedding $\rho \circ \hat{\iota}$.
Here we use the fact from subsection~\ref{subsec:coisotropic} that a coisotropic embedding and its Lagrangian lift represent the same cohomology class in $H^1 (S^1 \times L, \R)$.
\end{proof}

\bibliography{holo-disks}
\bibliographystyle{plain}

\end{document}